\title{On the Number of Connected Components of Ranges of Divisor Functions}
\author{Nina Zubrilina}
\date{\today}
\begin{document}
\maketitle

\newtheorem{theorem}{Theorem}[section]
\newtheorem{lemma}[theorem]{Lemma}
\newtheorem*{claim}{Claim}
\newtheorem*{question}{Question}
\newtheorem{defn}[theorem]{Definition}
\newtheorem{remark}[theorem]{Remark}
\newtheorem{corollary}[theorem]{Corollary}
\newtheorem{prop}[theorem]{Proposition}
\newcommand{\diam}{\mathrm{diam}}
\newcommand{\edim}{\mathrm{edim}}
\newcommand\abs[1]{\left|#1\right|}
\newcommand{\tab}{\hspace*{1cm}}

\newcommand{\Prob}{\mathbb{P}}
\newcommand{\E}{\mathbb{E}}
\newcommand{\Q}{\mathbb{Q}}
\newcommand{\R}{\mathbb{R}}
\newcommand{\N}{\mathbb{N}}
\newcommand{\Z}{\mathbb{Z}}
\newcommand{\F}{\mathbb{F}}
\newcommand{\C}{\mathbb{C}}
\renewcommand{\F}{\mathbb{F}}
\newcommand{\Cl}{\operatorname{Cl}}
\newcommand{\eps}{\varepsilon}
\newcommand{\norm}[1]{\left\lVert#1\right\rVert}
\renewcommand{\d}{\partial}
\newcommand{\dint}{\mathrm{d}}
\newcommand*\Lapl{\mathop{}\!\mathbin\bigtriangleup}
\newcommand{\mat}[4] {\left(\begin{array}{cccc}#1 & #2\\ #3 & #4 \end{array}\right)}
\newcommand{\mattwo}[2] {\left(\begin{array}{cc}#1\\ #2 \end{array}\right)}
\newcommand{\pphi}{\varphi}
\newcommand{\eqmod}[1]{\overset{\bmod #1}{\equiv}}
\renewcommand{\O}[1]{\mathcal{O}_{#1}}
\newcommand{\bigO}[1]{\mathcal{O}\left( #1 \right)}
\newcommand{\fr}[1]{\mathfrak{#1}}
\newcommand{\p}{\mathfrak{p}}
\newcommand{\ord}{\operatorname{ord}}
\newcommand{\Li}{\operatorname{Li}}
\newcommand{\Ei}[1]{\operatorname{Ei}\left(#1\right)}
\newcommand{\sigbar}{ \overline{\sigma_{-r}(\N)}}

\DeclarePairedDelimiter\ceil{\left\lceil}{\right\rceil}
\DeclarePairedDelimiter\floor{\lfloor}{\rfloor}

\newcommand{\thistheoremname}{}
\newtheorem{genericthm}[theorem]{\thistheoremname}
\newenvironment{namedprob}[1]
  {\renewcommand{\thistheoremname}{#1}%
   \begin{genericthm}}
  {\end{genericthm}}
\newtheorem*{genericthm*}{\thistheoremname}
\newenvironment{namedprob*}[1]
  {\renewcommand{\thistheoremname}{#1}%
   \begin{genericthm*}}
  {\end{genericthm*}}
\linespread{2}
\begin{abstract}
For $r \in \R, r> 1$ and $n \in \Z^+$, the divisor function $\sigma_{-r}$ is defined by $\sigma_{-r}(n) := \sum_{d \vert n} d^{-r}$. In this paper we show the number $C_r$ of connected components of $\overline{\sigma_{-r}(\Z^+)}$ satisfies $$\pi(r) + 1 \leq C_r \leq \frac{1}{2}\exp\left[\frac{1}{2}\dfrac{r^{20/9}}{(\log r)^{29/9}} \left( 1 + \frac{\log\log r}{\log r - \log\log r} + \frac{\bigO{1}}{\log r}\right) \right],$$ where $\pi(t)$ is the number of primes $p\leq t$. We also show that $C_r$ does not take all integer values, specifically that it cannot be equal to $4$. 
\end{abstract}
\section{Introduction}

For a complex number $c \in \C$, we define the divisor function $\sigma_{c}: \N \to \R$ by
$$\sigma_{c}(n): = \sum\limits_{d \vert n} d^{c},$$ where for us $\N := \Z^+ := \{1, 2, \ldots\}$.
In 1986, Laatsch \cite{laatsch} studied the range of $\sigma_{-1}(\N)$. Laatsch showed it is a dense subset of $[1, + \infty)$ and asked if it is equal to $\Q \cap [1, \infty)$, to which Weiner (\cite{weiner}) answered negatively by showing $\Q \cap [1, \infty) \setminus \sigma_{-1}(\N) $ is also dense in $[1, + \infty)$. He asked what can be said about $\overline{\sigma_{c}(\N)}$ --- the topological closure of $\sigma_{c}(\N)$ for $c \in \C$. For an arbitrary complex $c$, this set has a complex fractal-like structure, which Defant studied in \cite{defantComplex} and \cite{Colinnew}.

A special case of the problem is $\sigbar$ for $r  \in \R, r > 1$. It is immediate that the range of this function is a subset of the interval $\left[1, \zeta(r)\right]$, where $$\zeta(r) := \sum\limits_{n \in \N} \dfrac{1}{n^r}$$ is the Riemann zeta function.
The divisor function is multiplicative, with 
$$\sigma_{-r}(p^\alpha) = 1 + \dfrac{1}{p^r} + \cdots + \dfrac{1}{p^{r\alpha}} = \dfrac{p^{(\alpha + 1)r }-1}{p^{(\alpha + 1)r}- p^{\alpha r}}.$$ We denote the $m$th prime number with $p_m$. We say $p_m$ is \emph{$r$-mighty} if 
$$1 + \dfrac{1}{p^r_m} > \prod\limits_{t= m+1}^\infty \dfrac{1}{1 - p_t^{-r}}.$$

We use $C_r$ to denote the number of disjoint intervals of $\sigbar$. Defant shows [\cite{Colinpaper}, Theorem 2.1] that $C_r=1$ if and only if no primes are $r$-mighty. He proves that this happens if and only if $r \leq \eta \approx 1.8877909$, where $\eta$ is a constant we will call the Defantstant. The Defantstant is the unique real number in $\left[1, 2\right]$ satisfying 
$$\dfrac{2^\eta}{2^\eta - 1}\dfrac{3^\eta+1}{3^\eta - 1} = \zeta(\eta).$$ In 2015, Sanna  \cite{Sannapaper} provides an algorithm to compute $\sigbar$ for a given $r$ and also shows that the number of connected components of $\sigbar$ is always finite. In this paper, we give effective bounds on $C_r$ from above and below. 

For a prime $p$, let $\sigma_{-r}(p^\infty) = \displaystyle{\lim_{n \to + \infty}}\sigma_{-r}(p^n) = \dfrac{1}{1 - p^{-r}}.$ We use $P_r $ to denote the largest $r$-mighty prime and set $P_r = 0$ if there are no $r$-mighty primes. We denote the number of $r$-mighty primes by $N_r$.

The paper is structured as follows. In the first section, we prove that for any positive constant $w$ with $w < 11/9$ , for all sufficiently large $r$, $$r < P_r  \leq \left(\dfrac{r}{w \log r}\right)^{20/9}, $$
$$ \pi(r) < N_r  \leq \pi\left(\dfrac{r^{20/9}}{(w \log r)^{20/9}}\right),$$ where $\pi(x)$ is the number of primes less or equal to $x$. In Section $2$, we use these bounds to deduce bounds for $C_r$, showing that
$$\pi(r) + 1 \leq C_r \leq \frac{1}{2}\exp\left[\frac{1}{2}\dfrac{r^{20/9}}{(\log r)^{29/9}} \left( 1 + \frac{\log\log r}{\log r - \log\log r} + \frac{\bigO{1}}{\log r}\right) \right].$$  Lastly, in Section $3$ we show that $C_r$ can never be equal to $4$.

\section{Number of Mighty Primes}
As mentioned earlier, Defant shows [\cite{Colinpaper}, Theorem 2.1] that $C_r=1$ if and only if there are no $r$-mighty primes. In this section, we prove some theorems about $r$-mighty primes which will allow us to obtain further bounds on $C_r$.
\begin{theorem}\label{Mr}
Let $w$ be a positive constant with $w < 11/9$. Then all primes $Q < r$ are $r$-mighty, and for all sufficiently large $r$ (the implicit constant is independent of $w$), all primes $Q>\dfrac{r^{20/9}}{(w \log r)^{20/9}}$ are not $r$-mighty. 
\end{theorem}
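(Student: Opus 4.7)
I will take logarithms in the $r$-mighty condition and reduce it to comparing $Q^{-r}$ with the tail sum $S(Q,r) := \sum_{p > Q}p^{-r}$. Using the Taylor expansions
\[
\log(1 + Q^{-r}) = Q^{-r} + O(Q^{-2r}), \qquad -\sum_{p > Q}\log(1 - p^{-r}) = S(Q,r) + O\!\left(\sum_{p > Q}p^{-2r}\right),
\]
the error terms are negligible for $r$ large, so the theorem reduces to two assertions: for prime $Q < r$, $S(Q,r) < Q^{-r}$; and for prime $Q > (r/(w\log r))^{20/9}$, $S(Q,r) > Q^{-r}$.

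\emph{First assertion.} For $Q < r$ prime, the tail sum $S(Q,r)$ concentrates on primes within roughly $Q/r$ of $Q$. Partitioning $(Q, \infty)$ into successive intervals of length $Q/r$, using the Chebyshev-type upper bound $\pi(x+h) - \pi(x) \ll h/\log x$ on each, and majorizing $p^{-r} \leq Q^{-r}(1 + (p-Q)/Q)^{-r}$, one gets $S(Q,r) \ll Q^{1-r}/(r\log Q)$. For $Q < r$, this is $\ll Q^{-r}/\log Q < Q^{-r}$ once $Q$ is above a small absolute threshold; the remaining cases $Q \in \{2,3,5\}$ are handled by the explicit Defantstant analysis, which certifies them as $r$-mighty for $r$ large.

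\emph{Second assertion.} Here the key input is an unconditional short-interval prime counting lower bound of the form $\pi(x+h) - \pi(x) \gg h/\log x$ valid whenever $h \geq x^{11/20}$. I set $h := Q^{11/20}$. Then $[Q, Q+h]$ contains $\gg Q^{11/20}/\log Q$ primes, and each satisfies
\[
p^{-r} \;\geq\; Q^{-r}(1 + Q^{-9/20})^{-r} \;\geq\; Q^{-r}\exp(-rQ^{-9/20}).
\]
The hypothesis $Q^{9/20} > r/(w\log r)$ yields $rQ^{-9/20} < w\log r$, so the exponential factor is at least $r^{-w}$. Combining,
\[
S(Q,r) \;\gg\; \frac{Q^{11/20}\,r^{-w}}{\log Q}\cdot Q^{-r}.
\]
The desired inequality $S(Q,r) > Q^{-r}$ becomes $Q^{11/20} \gg r^w\log Q$; substituting the lower bound on $Q$ transforms this into $r^{11/9 - w} \gg (\log r)^{C}$ for some absolute $C$, which holds for all $r$ sufficiently large precisely because $w < 11/9$.

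\textbf{Main obstacle.} The crucial balancing is in the second assertion: the short-interval length $h$ must be long enough for the lower-bound prime number theorem to apply (requiring $h \geq Q^{11/20}$), yet short enough that the factor $(1 + h/Q)^{-r}$ does not erase the gain from having many primes in the window (requiring $rh/Q$ to be of order at most $\log r$). The exponent $11/20$ from the short-interval theorem controls both the threshold exponent $20/9 = 1/(1 - 11/20)$ on $Q$ and the critical constant $11/9 = (11/20)/(1 - 11/20)$ beyond which $w$ cannot be pushed. The auxiliary claim that the ``sufficiently large $r$'' threshold is independent of $w$ requires isolating each $w$-dependence into the purely logarithmic factors, so that the power-of-$r$ gain $r^{11/9 - w}$ absorbs them uniformly as $w$ approaches $11/9^{-}$.
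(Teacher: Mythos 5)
Your treatment of the second assertion is essentially the paper's argument: the same short-interval lower bound $\pi(x+y)-\pi(x)\gg y/\log x$ for $y\geq x^{11/20}$ is the key input, the window $[Q,Q+Q^{11/20}]$ plays the role of the paper's $Q_0=Q(1+Q^{-9/20})$, and the exponent bookkeeping ($20/9$, $11/9$) is identical; your direct prime count in the window replaces the paper's detour through a Riemann--Stieltjes integral and exponential-integral estimates, which is if anything cleaner. The reduction of ``$r$-mighty'' to comparing $Q^{-r}$ with the tail sum is also the same in substance (the paper uses Bonferroni bounds on $\prod(1-p^{-r})$ where you use logarithms).

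However, there is a genuine error in your proof of the first assertion. The bound $S(Q,r)\ll Q^{1-r}/(r\log Q)$ is false. Your partition uses intervals of length $Q/r$, which for $Q<r$ have length less than $1$, and no Chebyshev-type bound of the form $\pi(x+h)-\pi(x)\ll h/\log x$ holds for such short intervals: whenever an interval of length $h=o(\log x)$ contains a prime, the left side is $\geq 1$ while the right side tends to $0$. Concretely, if $Q$ and $Q+2$ are twin primes and $Q$ is close to $r$, then
\[
S(Q,r)\;\geq\;(Q+2)^{-r}\;=\;Q^{-r}\left(1+\tfrac{2}{Q}\right)^{-r}\;\geq\;Q^{-r}e^{-2r/Q}\;\gg\;Q^{-r},
\]
with an absolute implied constant, which contradicts your claimed $S(Q,r)\ll Q^{-r}/\log Q$. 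The spurious factor $1/\log Q$ is exactly what made your conclusion $S(Q,r)<Q^{-r}$ look automatic; without it, the inequality is a fight over constants. The paper's proof wins that fight by combining the trivial bound $\pi(t)-\pi(Q)\leq t-Q$ with the one nontrivial fact that there is no prime in $(Q,Q+2)$ for $Q>2$, which yields $S(Q,r)\leq \frac{3Q}{(Q-1)(Q+2)^{r}}$; the needed inequality then reduces to $\left(1+\frac{2}{Q}\right)^{r}>\frac{3Q}{Q-1}\left(1+Q^{-r}\right)$, which for $r>Q$ follows from $\left(1+\frac{2}{Q}\right)^{Q}$ being increasing and the case $Q=3$ checking out numerically (with $Q=2$ handled by a separate computation plus monotonicity in $r$). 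Your argument as written does not recover this constant, so the first assertion is not established.
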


\begin{corollary}\label{Mrt}
We have 
$$P_r > r, N_r\geq \pi(r-1)$$ and for all sufficiently large $r$ (the implicit constant is again independent of $w$), 
 $$ P_r  \leq \left(\dfrac{r}{w \log r}\right)^{20/9}, N_r  \leq \pi\left(\dfrac{r^{20/9}}{(w \log r)^{20/9}}\right).$$
\end{corollary}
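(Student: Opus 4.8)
Corollary~\ref{Mrt} is a direct consequence of Theorem~\ref{Mr}, so the plan below is really a plan for the theorem, with the corollary read off at the end. In both halves of the theorem the starting move is the same: the condition ``$p_m$ is $r$-mighty'', namely $1+p_m^{-r}>\prod_{t>m}(1-p_t^{-r})^{-1}$, becomes after taking logarithms a comparison of $\log(1+p_m^{-r})=p_m^{-r}+\bigO{p_m^{-2r}}$ with $-\sum_{t>m}\log(1-p_t^{-r})=\sum_{t>m}p_t^{-r}+\bigO{\sum_{t>m}p_t^{-2r}}$. Writing $S(p):=\sum_{q>p,\ q\text{ prime}}q^{-r}$, the whole problem thus reduces to comparing $p_m^{-r}$ with $S(p_m)$, the lower-order corrections being harmless throughout.

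For the first assertion --- every prime $p<r$ is $r$-mighty --- the plan is to overestimate $S(p)$ crudely. Since consecutive odd primes differ by at least $2$, $S(p)\le\sum_{j\ge 1}(p+2j)^{-r}<\int_0^\infty(p+2t)^{-r}\,\dint t=\frac{p^{1-r}}{2(r-1)}$, so the mightiness inequality holds as soon as $p^{-r}$ beats $\tfrac12(1+o(1))\tfrac{p^{1-r}}{2(r-1)}$, i.e.\ as soon as $p$ is somewhat below $2r$; in particular for every prime $p<r$, once $r$ exceeds a small absolute bound. The finitely many remaining small-$r$ cases (where the only primes below $r$ are $2$ and perhaps $3,5$, and the crude bound is too lossy) I would finish by direct estimation using $\prod_{q>p}(1-q^{-r})^{-1}=\zeta(r)\prod_{q\le p}(1-q^{-r})$. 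This direction is entirely soft.

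The substance is the second assertion. Fix a prime $p>r^{20/9}/(w\log r)^{20/9}$. Because $\prod_{q>p}(1-q^{-r})^{-1}\ge e^{S(p)}\ge 1+S(p)$, if $p$ were $r$-mighty we would need $S(p)<p^{-r}$; I will instead force $S(p)\ge p^{-r}$. The key input is a theorem on primes in short intervals of relative width $x^{-9/20}$ (Heath--Brown--Iwaniec, refined by Baker--Harman--Pintz): $\pi(x+h)-\pi(x)\gg h/\log x$ for $h=x^{11/20}$ and $x$ large. Applying this on $(p,p+p^{11/20}]$ and noting every prime $q$ there satisfies $q\le p(1+p^{-9/20})$, one gets $S(p)\gg\frac{p^{11/20}}{\log p}\,(1+p^{-9/20})^{-r}\,p^{-r}$. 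The hypothesis on $p$ is exactly $p^{9/20}>r/(w\log r)$, hence $r\,p^{-9/20}<w\log r$ and $(1+p^{-9/20})^{-r}\ge e^{-r p^{-9/20}}>r^{-w}$; moreover $p^{11/20}>(r/(w\log r))^{11/9}$, and by monotonicity of the bound in $p$ it suffices to take $p$ at the threshold, where $\log p\asymp\log r$. Multiplying these estimates, $S(p)\gg r^{11/9-w}(\log r)^{-20/9}\,p^{-r}$, and since $w<11/9$ the prefactor tends to $\infty$; so $S(p)>p^{-r}$ for $r$ large, the desired contradiction, and $p$ is not $r$-mighty. The origin of the two exponents is then transparent: a short-interval prime count valid on intervals of relative width $x^{-\lambda}$ forces the critical exponent $1/\lambda$ and the admissible range $w<(1-\lambda)/\lambda$, and $\lambda=9/20$ gives $20/9$ and $11/9$.

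Corollary~\ref{Mrt} now follows: the first part of Theorem~\ref{Mr} gives $N_r\ge\pi(r-1)$ at once, and --- since the argument above leaves ample room for primes up to roughly $2r$ --- a prime just above $r$ (Bertrand's postulate) is still $r$-mighty, whence $P_r>r$; dually the second part confines every $r$-mighty prime to $\big(0,\,r^{20/9}/(w\log r)^{20/9}\big]$, giving $P_r\le r^{20/9}/(w\log r)^{20/9}=(r/(w\log r))^{20/9}$ and $N_r\le\pi\big(r^{20/9}/(w\log r)^{20/9}\big)$. The main obstacle is the second assertion, and within it making the lower bound on $S(p)$ strong enough precisely at the threshold $p\approx(r/(w\log r))^{20/9}$: this is where a genuinely analytic ingredient (primes in short intervals of length near $x^{11/20}$) cannot be avoided, and where the bookkeeping must be kept tight, since slack in that ingredient --- or in passing from $S(p)$ to $p^{-r}$ --- directly weakens the final exponent and the range of $w$.
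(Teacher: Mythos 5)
Your proposal is correct and follows essentially the paper's route: recast the mightiness of a prime $p$ as a comparison of $p^{-r}$ with $S(p)=\sum_{q>p}q^{-r}$ (this is Lemma~\ref{rephrased}), overestimate $S(p)$ elementarily using the prime gap of $2$ after $p$ to show primes below roughly $2r$ are $r$-mighty (Theorem~\ref{mainlower}, which the paper executes through a Stieltjes-integral form of $S$ but with the same content), and underestimate $S(p)$ using the $x^{11/20}$ short-interval prime count (Theorem~\ref{pidiflower}) to show primes above $(r/(w\log r))^{20/9}$ are not, recovering the exponents $20/9$ and $11/9$ exactly as the paper does. One refinement worth recording: the inequality $P_r>r$ requires an $r$-mighty prime \emph{strictly greater} than $r$, which does not follow from ``all primes $<r$ are $r$-mighty''; Bertrand's postulate alone only places a prime somewhere in $(r,2r)$, not necessarily below the $\approx 2r$ cutoff your crude bound tolerates, so one should instead invoke a prime in $(r,6r/5)$ (Theorem~\ref{primeint}) for $r>25$ --- your proposal at least notices that something beyond Theorem~\ref{Mr} is needed here, whereas the paper asserts $P_r>r$ in the corollary with no justification.
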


We will need some lemmas to prove the main results of the section. Let $r > \eta$, $m\in \N$, and $Q := p_m$. (We take $r > \eta$ since otherwise $P_r = N_r = 0$, as we know from \cite{Colinpaper}.)

\begin{defn}

For every $k, m \in \N$, let 
$$S_{k, m}(r): = \sum (p_{i_1}\cdots p_{i_k})^{-r},$$ where the sum is taken over all integers $i_1, \ldots, i_k$ such that 
$m< i_1 < \cdots < i_k$.
\end{defn}

\begin{lemma}\label{rephrased}
Let $r > \eta$. If $S_{1, m}(r) - S_{1, m}(r)^2/2 > \dfrac{1}{Q^r+1}$, then $Q$ is not $r$-mighty. Also, if $S_{1, m}(r) < \dfrac{1}{Q^r+1}$, then $Q$ is $r$-mighty.
\end{lemma}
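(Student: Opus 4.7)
The plan is to rewrite the $r$-mighty condition in a form that directly exposes $S_{1,m}(r)$, and then sandwich the resulting quantity between two Bonferroni-type inequalities.

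Set $T := \prod_{t > m}(1 - p_t^{-r})^{-1}$, so that the definition of $r$-mighty reads $1 + Q^{-r} > T$. Subtracting $1$ and dividing by $T$ (a one-line manipulation, legitimate since $T > 1$) shows this is equivalent to
$$\frac{T - 1}{T} \;<\; \frac{1}{Q^{r} + 1}.$$
The left-hand side equals $1 - \prod_{t > m}(1 - p_t^{-r})$, which by inclusion--exclusion is the absolutely convergent alternating sum $S_{1,m}(r) - S_{2,m}(r) + S_{3,m}(r) - \cdots$. Convergence of all series and products used below is immediate from $r > \eta > 1$, which gives $\sum_{p} p^{-r} < \infty$.

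For the \emph{mighty} direction, I would invoke the upper Bonferroni bound $1 - \prod_i(1 - a_i) \leq \sum_i a_i$ for $a_i \in [0,1]$, proved by a one-line induction using the identity $1 - (1-a_n)x = a_n + (1-a_n)(1-x)$. Taking the limit as the product ranges over $t > m$, this gives $\frac{T-1}{T} \leq S_{1,m}(r)$, so the hypothesis $S_{1,m}(r) < \frac{1}{Q^r + 1}$ immediately makes $Q$ $r$-mighty via the equivalent criterion above.

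For the \emph{non-mighty} direction, I would apply the complementary lower Bonferroni bound $1 - \prod_i(1-a_i) \geq \sum_i a_i - \sum_{i<j} a_i a_j$, also a short induction, passing to the limit to get $\frac{T-1}{T} \geq S_{1,m}(r) - S_{2,m}(r)$. The elementary identity $S_{1,m}(r)^2 = \sum_{t > m} p_t^{-2r} + 2 S_{2,m}(r)$ yields $S_{2,m}(r) \leq \frac{1}{2} S_{1,m}(r)^2$, hence $\frac{T-1}{T} \geq S_{1,m}(r) - \frac{1}{2} S_{1,m}(r)^2$; combined with the hypothesis, this puts $Q$ on the non-mighty side of the equivalent criterion. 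There is no serious obstacle in the argument --- everything reduces to two Bonferroni inequalities and one symmetric-function estimate, with the only mild care needed being the justification of the limits in the Bonferroni truncations, which is standard given the absolute convergence noted above.
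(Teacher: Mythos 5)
Your proposal is correct and takes essentially the same route as the paper: both reduce the $r$-mighty condition to $1 - \prod_{t>m}\left(1-p_t^{-r}\right) < \frac{1}{Q^r+1}$ and then sandwich $1 - \prod_{t>m}\left(1-p_t^{-r}\right)$ between $S_{1,m}(r) - S_{1,m}(r)^2/2$ and $S_{1,m}(r)$, with the bound $S_{2,m}(r)\le S_{1,m}(r)^2/2$ obtained from the same symmetric-function identity. The only (harmless) difference is how the sandwich is justified --- the paper expands the product as the alternating series $S_{1,m}-S_{2,m}+S_{3,m}-\cdots$ and verifies $S_{k,m}(r)>S_{k+1,m}(r)$ via $S_{1,m}(r)<1$ to truncate it, while you use the two finite Bonferroni inequalities and pass to the limit, which avoids that monotonicity check.
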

\begin{proof}
By definition, 
\begin{align}
Q \text{ is $r$-mighty} &\iff (1 + Q^{-r}) > \prod\limits_{t = m+1}^\infty \dfrac{1}{1 - p_t^{-r}}  \nonumber\\
&\iff 1 - \dfrac{1}{Q^r + 1} < \prod\limits_{t = m+1}^\infty \left(1 - p_t^{-r}\right)  \nonumber\\
&\iff \dfrac{1}{Q^r + 1} >1 -  \prod\limits_{t = m+1}^\infty \left(1 - p_t^{-r}\right). \label{reformulated} 
\end{align}
We claim that 
\begin{align}\label{nuggets}
S_{1, m}(r) - S_{1, m}(r)^2/2 \leq 1 -  \prod\limits_{t = m+1}^\infty \left(1 - p_t^{-r}\right) \leq S_{1, m}(r).
\end{align}
 Clearly, this claim implies the statements of the lemma. \\
We begin by observing that 
$$\prod\limits_{t = m+1}^\infty \left(1 - p_t^{-r}\right) = 1 - S_{1, m}(r) + S_{2, m}(r) -S_{3, m}(r) +  \cdots.$$
Now, $S_{k, m}(r) > S_{k+1, m}(r).$ Indeed, we see that $\zeta(\eta)-1 < \zeta(2) - 1 = \frac{\pi^2}{6} - 1 < 1$, and thus
$$S_{1, m}(r) = \sum\limits_{t =m+1}^\infty p_t^{-r} \leq -1 +   \sum\limits_{n =1}^\infty n^{-r}  = \zeta(r) - 1 \leq \zeta(\eta)-1 <1.$$
It follows that 
$$S_{k+1, m}(r) \leq \sum\limits_{t =m+1}^\infty p_t^{-r} S_{k, m}(r) = S_{1, m}(r)\cdot S_{k, m}(r) < S_{k, m}(r)$$ for all $k \in \N$.   Note also that 
$$S_{2, m}(r) = \sum\limits_{i >j > m} (p_ip_j)^{-r} = \dfrac{1}{2}\left(\sum\limits_{i, j > m} (p_ip_j)^{-r} -  \sum\limits_{i > m} \left(p_i^2\right)^{-r} \right) < \dfrac{S_{1, m}(r)^2}{2}.$$
Thus, since  $1 -  \prod\limits_{t = m+1}^\infty \left(1 - p_t^{-r}\right) = S_{1, m}(r) - S_{2, m}(r) + S_{3, m}(r) - \cdots, $ we see that
$$ S_{1, m}(r) - S_{1, m}(r)^2/2 \leq S_{1, m}(r) - S_{2, m}(r) \leq 1 -  \prod\limits_{t = m+1}^\infty \left(1 - p_t^{-r}\right) \leq S_{1, m}(r). $$
This proves (\ref{nuggets}). Combining  (\ref{nuggets}) with (\ref{reformulated}), we get the statement of the lemma.
\end{proof}
We will be working with the following integral form of $S_{1, m}(r)$: 
\begin{lemma}\label{S1intform}
We have
$$S_{1, m}(r) = r\int_{Q}^\infty\dfrac{\pi(t) - \pi({Q})}{t^{r+1}}\dint t. $$
\end{lemma}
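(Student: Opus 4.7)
The plan is to represent each summand $p_i^{-r}$ as an integral and then swap summation with integration. Concretely, for any real $x>0$ we have the identity
$$x^{-r} \;=\; r\int_x^\infty t^{-r-1}\,\dint t,$$
which is just the fundamental theorem of calculus applied to $t \mapsto -t^{-r}$. Applying this to each $p_i^{-r}$ with $i > m$ gives
$$S_{1,m}(r) \;=\; \sum_{i>m} p_i^{-r} \;=\; \sum_{i>m} r\int_{p_i}^\infty t^{-r-1}\,\dint t \;=\; \sum_{i>m} r\int_{Q}^\infty \mathbf{1}_{\{t \geq p_i\}}\, t^{-r-1}\,\dint t,$$
where in the last equality I used that $p_i > Q$ for every $i > m$, so extending the lower limit down to $Q$ adds nothing to the integrand.

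Next I would interchange the sum and the integral. Since every term is nonnegative, this is justified by Tonelli's theorem (no dominated convergence needed). After swapping, the inner sum becomes a counting function:
$$\sum_{i>m} \mathbf{1}_{\{t \geq p_i\}} \;=\; \#\{i > m : p_i \leq t\} \;=\; \pi(t) - m \;=\; \pi(t) - \pi(Q)$$
for $t \geq Q$, using $\pi(Q) = \pi(p_m) = m$. (For $Q \leq t < p_{m+1}$ this quantity is $0$, which is consistent.) Putting it together yields the claimed formula
$$S_{1,m}(r) \;=\; r\int_Q^\infty \frac{\pi(t) - \pi(Q)}{t^{r+1}}\,\dint t.$$

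I do not anticipate any real difficulty; the one thing to verify is the legitimacy of interchanging sum and integral, but nonnegativity of every integrand makes this immediate. (An alternative route is Abel summation: write the sum as a Stieltjes integral $\int_Q^\infty t^{-r}\,\dint(\pi(t)-\pi(Q))$ and integrate by parts, noting that the boundary term at $Q$ vanishes since $\pi(Q)-\pi(Q)=0$ and the boundary term at $\infty$ vanishes because $\pi(t)=O(t/\log t)$ and $r>1$. This gives the same identity but requires an extra estimate, so I would prefer the Tonelli argument.)
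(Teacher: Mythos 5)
Your proof is correct, and it takes a genuinely different route from the paper. The paper writes $S_{1,m}(r)$ as the Riemann--Stieltjes integral $\int_{Q+}^\infty t^{-r}\,\dint\pi(t)$ and integrates by parts, which produces a boundary term $-\pi(Q)/Q^r$ that is then absorbed by rewriting $\pi(Q)/Q^r = r\int_Q^\infty \pi(Q)\,t^{-r-1}\,\dint t$; this is precisely the ``alternative route'' you sketch in your final parenthetical. Your main argument instead expands each $p_i^{-r}$ as $r\int_{p_i}^\infty t^{-r-1}\,\dint t$ and swaps sum and integral by Tonelli, after which the inner sum collapses to $\pi(t)-\pi(Q)$. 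Every step checks out: the identity $\pi(Q)=m$ for $Q=p_m$, the extension of the lower limit via the indicator, and the nonnegativity justifying the interchange are all handled correctly. What your approach buys is the avoidance of Stieltjes integration by parts and its boundary-term bookkeeping (the paper's vanishing of $\pi(t)/t^r$ at infinity, which does quietly use $r>1$ and $\pi(t)=O(t)$); what the paper's approach buys is brevity for a reader already comfortable with Abel summation. Either proof is acceptable here.
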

\begin{proof}
Using the Riemann-Stieltjes integral for $S_{1, m}(r)$ and integration by parts, we get
\begin{align*}
S_{1, m}(r) = \sum\limits_{t =m+1}^\infty p_t^{-r} &= \int_{Q+}^\infty \dfrac{1}{t^r} \dint \pi(t) \\
& = \dfrac{\pi(t)}{t^r}\Big\vert_{Q+}^\infty + r\int_{Q}^\infty\dfrac{\pi(t)}{t^{r+1}}\dint t \\
&=  -\dfrac{\pi({Q})}{{Q}^r}+ r\int_{Q}^\infty\dfrac{\pi(t)}{t^{r+1}}\dint t  \\
&=  r\int_{Q}^\infty\dfrac{\pi(t) - \pi({Q})}{t^{r+1}}\dint t 
\end{align*}
as desired.
\end{proof}
We will now prove one of the two statements of Theorem \ref{Mr}.
\begin{theorem}\label{mainlower}
Let $r > 1$, $Q$ be a prime and suppose $Q<r$. Then $Q$ is $r$-mighty.
\end{theorem}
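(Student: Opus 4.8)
The plan is to reduce, via Lemma~\ref{rephrased}, to an explicit upper bound on $S_{1,m}(r)$ coming from its integral form in Lemma~\ref{S1intform}. Write $Q = p_m$; by Lemma~\ref{rephrased} it suffices to show $S_{1,m}(r) < \tfrac{1}{Q^r+1}$. I would first dispose of the main case $Q \geq 3$. Every prime exceeding $Q$ is then odd, so the primes in $(Q,t]$ all lie in $\{Q+2, Q+4, \dots\}$, which gives the crude but clean bound $\pi(t) - \pi(Q) \leq (t-Q)/2$ for all $t \geq Q$. Substituting this into Lemma~\ref{S1intform} and integrating (a completely elementary computation),
$$S_{1,m}(r) \;=\; r\int_Q^\infty \frac{\pi(t)-\pi(Q)}{t^{r+1}}\,\dint t \;\leq\; \frac r2\int_Q^\infty\Bigl(t^{-r} - Q\,t^{-r-1}\Bigr)\dint t \;=\; \frac{Q^{1-r}}{2(r-1)}.$$
It then remains to verify $\frac{Q^{1-r}}{2(r-1)} < \frac{1}{Q^r+1}$, which after clearing denominators becomes $Q + Q^{1-r} < 2(r-1)$; since $Q^{1-r} < 1$ and $Q < r$, and since $Q \geq 3$ forces $r > 3$, we get $Q + Q^{1-r} < Q + 1 < r + 1 \leq 2(r-1)$, and the case $Q \geq 3$ is finished.

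The case $Q = 2$ (which forces $r > 2$) needs more care, because there the first-order estimate above is genuinely too weak near $r = 2$: already at $r=2$ one has $S_{1,1}(2) = \sum_{p \geq 3} p^{-2} = \bigl(\sum_p p^{-2}\bigr) - \tfrac14 \approx 0.2022 > \tfrac15 = (2^2+1)^{-1}$, so no sharpening of the bound on $\pi(t)-\pi(2)$ can recover $S_{1,1}(r) < (2^r+1)^{-1}$ on all of $(2,\infty)$. I would instead argue indirectly. First observe that mightiness is downward closed along the primes: if $p_m$ is $r$-mighty, then so is $p_{m-1}$, since
$$1 + p_{m-1}^{-r} \;>\; 1 + p_m^{-r} \;>\; \prod_{t > m}\bigl(1 - p_t^{-r}\bigr) \;>\; \bigl(1 - p_m^{-r}\bigr)\prod_{t > m}\bigl(1 - p_t^{-r}\bigr) \;=\; \prod_{t \geq m}\bigl(1 - p_t^{-r}\bigr).$$
Now $r > 2 > \eta$, so the characterization of the Defantstant from~\cite{Colinpaper} (no prime is $r$-mighty iff $r \leq \eta$) supplies at least one $r$-mighty prime; iterating the observation down the list of primes shows that $2 = p_1$ is $r$-mighty, completing the proof. (If one insists on a self-contained argument for $Q=2$, one can instead retain the $-S_{2,m}(r)$ term in \eqref{nuggets} and bound $S_{2,m}(r) = \tfrac12\bigl(S_{1,m}(r)^2 - \sum_{i > m} p_i^{-2r}\bigr)$ from below, together with $S_{3,m}(r) \leq \tfrac12 S_{1,m}(r)^3$, but this is appreciably more fiddly.)

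I expect the only real obstacle to be precisely this small-$Q$, small-$r$ corner: for every prime $Q \geq 3$ the integral bound goes through essentially automatically once one notices that the relevant primes are odd, but for $Q = 2$ with $r$ close to $2$ the sufficient condition of Lemma~\ref{rephrased} provably fails, so the argument must either borrow the Defantstant result of~\cite{Colinpaper} (as above) or pay for a second-order correction. Everything else — the integration, and the inequality $Q + Q^{1-r} < 2(r-1)$ — is routine.
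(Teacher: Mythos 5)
Your treatment of the case $Q \geq 3$ is correct and takes a mildly different route from the paper. The paper notes that $\pi(t)-\pi(Q)=0$ on $[Q,Q+2)$, shifts the lower limit to $Q+2$, and then uses the trivial bound $\pi(t)-\pi(Q)\leq t-Q$; you instead use the parity bound $\pi(t)-\pi(Q)\leq(t-Q)/2$ everywhere (valid since $Q\geq 3$ forces every prime $>Q$ to be odd). Your integral evaluates cleanly to $Q^{1-r}/(2(r-1))$, and the resulting inequality $Q+Q^{1-r}<2(r-1)$ closes immediately from $Q<r$ and $r>3$, arguably more smoothly than the paper's reduction to $3(1+\tfrac{1}{Q-1})(1+\tfrac{1}{Q^Q})<(1+\tfrac{2}{Q})^Q$. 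You also correctly notice that $Q=2$ genuinely cannot be handled by sharpening the $\pi$-bound, because $S_{1,1}(2)\approx 0.2022>1/5$, so the sufficient condition of Lemma~\ref{rephrased} fails for $r$ just above $2$; the paper handles this case by a direct computer verification that $2$ is $2$-mighty together with the monotonicity of Lemma~\ref{rpexists}.

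However, your replacement argument for $Q=2$ does not work. The claim that $r$-mightiness is downward closed along primes is false: the paper's \eqref{orderofrs} records $r_3<r_2$, so $\eta=r_3$ and for any $r\in(r_3,r_2]$ the prime $3=p_2$ is $r$-mighty while $2=p_1$ is not. So ``some prime is $r$-mighty'' plus downward iteration does not yield that $2$ is $r$-mighty. The displayed chain purporting to prove the closure property is also wrong on its face: the $r$-mighty condition compares $1+p_m^{-r}$ with $\prod_{t>m}\bigl(1-p_t^{-r}\bigr)^{-1}$, not with $\prod_{t>m}\bigl(1-p_t^{-r}\bigr)$, so the middle inequality $1+p_m^{-r}>\prod_{t>m}(1-p_t^{-r})$ is a triviality (left side $>1$, right side $<1$) that uses no hypothesis, and passing to $\prod_{t\geq m}$ with the correct reciprocal product multiplies by a factor $>1$, pushing the inequality in the wrong direction. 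If you attempt to repair it by requiring $1+p_{m-1}^{-r}\geq(1+p_m^{-r})/(1-p_m^{-r})$, that sufficient condition already fails, e.g.\ at $p_{m-1}=5$, $p_m=7$, $r=2$. To handle $Q=2$ you must instead retain the second-order term as your parenthetical suggests, or simply follow the paper and verify $2$ is $2$-mighty by a finite computation and invoke Lemma~\ref{rpexists}.
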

\begin{proof}
First, we check with a computer calculation that $2$ is $2$-mighty, and hence it is $r$-mighty for all $r > 2$ (see Lemma \ref{rpexists}). Hence it suffices to check the claim for $Q \geq 3$. 

Next, as we see from Lemma \ref{rephrased} and Lemma \ref{S1intform}, it is sufficient to show that for such $Q>2$, 
$$\frac{1}{Q^r + 1} >  r\int_{Q}^\infty\dfrac{\pi(t) - \pi({Q})}{t^{r+1}}\dint t .$$
Since $Q>2$ is prime, we have that $\pi(t) - \pi(Q) = 0$ when $t \in [Q, Q+2)$. \\ Thus:
\begin{align*}
 r\int_{Q}^\infty\dfrac{\pi(t) - \pi({Q})}{t^{r+1}}\dint t &=  r\int_{Q+2}^\infty\dfrac{\pi(t) - \pi({Q})}{t^{r+1}}\dint t\\
&\leq  r\int_{Q+2}^\infty\dfrac{t - Q}{t^{r+1}}\dint t\\
&=  r\int_{Q+2}^\infty\dfrac{1}{t^{r}}\dint t -  r\int_{Q+2}^\infty\dfrac{Q}{t^{r+1}}\dint t\\ 
&= \frac{r}{r-1}\frac{1}{(Q+2)^{r-1}} - \frac{Q}{(Q+2)^r}\\
&= \frac{1}{(Q + 2)^{r-1}}\left( \frac{1}{r - 1} + \frac{2}{Q + 2}  \right)\\
&\leq \frac{1}{(Q + 2)^{r-1}}\left( \frac{1}{Q - 1} + \frac{2}{Q + 2}  \right) \\
&=  \frac{1}{(Q + 2)^{r}} \frac{3Q}{(Q - 1)}.
\end{align*}
Hence, to check that $Q$ is $r$-mighty, it is sufficient to check that $Q^r + 1 < \dfrac{\left(Q+2\right)^r\left(Q-1\right)}{3Q}$. Multiplying both sides by $3Q/Q^r$, this inequality becomes
$$ 3Q\left( 1 + \frac{1}{Q^r}\right) < \left(Q-1\right)\left(1 + \frac{2}{Q}\right)^r.$$
Since we assumed $Q <r$, it suffices to show $ 3Q\left( 1 + \frac{1}{Q^Q}\right) < \left(Q-1\right)\left(1 + \frac{2}{Q}\right)^Q$
or, equivalently, 
$$ 3\left( 1 + \frac{1}{Q-1} \right)\left( 1 + \frac{1}{Q^Q}\right) < \left(1 + \frac{2}{Q}\right)^Q.$$
For $Q \geq 3$, the left-hand side decreases and the right-hand side grows, so it suffices to check this for $Q = 3$, which holds.
\end{proof}

We will need the following theorem and two technical lemmas presented below to prove the other part of Theorem \ref{Mr}.

\begin{theorem}[\cite{difference}]\label{pidiflower}
For large enough $x$ and $y > x^{11/20}$, 
$$\pi(x + y) - \pi(x) \gg \frac{y}{\log( x + y)},$$ where $f\ll g$ means $f\leq \O{}(g)$ --- i.e., that $f\leq c\cdot g$ as functions, where $c$ is a positive constant.
\end{theorem}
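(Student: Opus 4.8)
This is a well-known deep theorem on the distribution of primes in short intervals, and I would not reprove it from scratch — the paper rightly cites \cite{difference} — but the route I would take is the classical one via the explicit formula, and it is worth recording because it explains why the exponent $11/20$ (equivalently the exponent $20/9 = 1/(1-\tfrac{11}{20})$ that appears in Theorem \ref{Mr}) is the natural threshold.

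First I would reduce to an estimate for $\psi$. It suffices to prove $\psi(x+y) - \psi(x) = y + o(y)$ whenever $y > x^{11/20}$: prime powers contribute only $\bigO{\sqrt{x+y}} = o(y)$, and passing from $\psi$ to $\pi$ by partial summation with weight $1/\log t$ costs nothing on an interval this short, so such an estimate already yields $\pi(x+y) - \pi(x) \sim y/\log(x+y)$, which is stronger than the stated lower bound. For $\psi$ I would use the truncated explicit formula
$$\psi(x+y) - \psi(x) - y = -\sum_{\abs{\gamma}\le T} \frac{(x+y)^\rho - x^\rho}{\rho} + \bigO{\frac{x(\log x)^2}{T}},$$
the sum running over the nontrivial zeros $\rho = \beta + i\gamma$ of $\zeta$. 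From $(x+y)^\rho - x^\rho = \rho\int_x^{x+y} t^{\rho-1}\,\dint t$ one gets $\abs{(x+y)^\rho - x^\rho}/\abs{\rho} \le y\,x^{\beta - 1}$, so taking $T := x^{9/20}(\log x)^3$ makes the error term $o(y)$ and bounds the sum over zeros by $y\sum_{\abs{\gamma}\le T} x^{\beta - 1}$.

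The analytic heart of the matter is then a zero-density estimate. Writing $N(\sigma, T)$ for the number of zeros with $\beta \ge \sigma$ and $\abs{\gamma}\le T$, partial summation in $\sigma$ gives $\sum_{\abs{\gamma}\le T} x^{\beta - 1} \ll (\log x)\int_{1/2}^1 x^{\sigma - 1} N(\sigma, T)\,\dint\sigma$ up to a harmless $\bigO{x^{-1/2} T\log T}$ coming from the zeros on or to the left of the critical line. If one has a bound $N(\sigma, T) \ll T^{A(1-\sigma)}(\log T)^B$ with $A < 20/9$ uniformly for $\sigma \in [\tfrac12, 1)$, then, because $T = x^{9/20 + o(1)}$, each summand is $x^{\sigma - 1}T^{A(1-\sigma)} = x^{-(1-\sigma)(1 - \frac{9}{20}A) + o(1)}$, which decays in a fixed power of $x$ uniformly in $\sigma$; disposing of the range of $\sigma$ very near $1$ with the Vinogradov--Korobov zero-free region, one gets that the sum over zeros is $o(y)$, and the claim follows. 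The hypothesis $y > x^{11/20}$ is thus exactly calibrated to how strong a zero-density estimate one can prove across the \emph{entire} range $\sigma \in [\tfrac12, 1)$.

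The hard part, which is the real content of \cite{difference}, is producing such an estimate with exponent below $20/9$ in every range, especially in the awkward middle range $\sigma \approx 3/4$: there Ingham's classical bound $N(\sigma, T) \ll T^{3(1-\sigma)/(2-\sigma)}(\log T)^5$ gives only exponent $12/5 = 2.4 > 20/9$. Beating this requires sharp large-value estimates for Dirichlet polynomials (the Halász--Montgomery method) fed with nontrivial exponential-sum bounds, which is precisely the machinery Heath-Brown and Iwaniec developed. Two shortcuts are worth noting: one could instead quote the Baker--Harman--Pintz theorem on primes in $[x, x + x^{0.525}]$, since $0.525 < \tfrac{11}{20}$; and if one wants only the stated lower bound of the correct order of magnitude (rather than the asymptotic), a weighted lower-bound sieve of Selberg/Harman type applied to the integers in $[x, x+y]$ with no small prime factor sidesteps the explicit formula and the zero-density input entirely.
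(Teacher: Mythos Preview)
The paper gives no proof of this statement at all: it is simply quoted from \cite{difference} and used as a black box, exactly as you anticipated. So there is nothing to compare, and your decision not to reprove it matches the paper.

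One historical correction to your sketch, since you took the trouble to outline the argument: the route via the explicit formula and a uniform zero-density bound $N(\sigma,T)\ll T^{A(1-\sigma)}(\log T)^B$ with $A<20/9$ is \emph{not} how the exponent $11/20$ was actually obtained. Huxley's zero-density results give $A=12/5$ at best in the critical range, which only yields $y>x^{7/12}$; no zero-density theorem with $A<20/9$ uniformly in $\sigma\in[\tfrac12,1)$ is known. The Heath--Brown--Iwaniec paper that the citation \cite{difference} presumably points to instead uses a weighted sieve (closer to what you mention in your last sentence) together with mean-value and large-value theorems for Dirichlet polynomials and nontrivial exponential-sum input --- the Hal\'asz--Montgomery machinery you name, but fed into a sieve rather than into a zero-density estimate. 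So your final ``shortcut'' is in fact the main road, and your main road would only reach $7/12$. This doesn't affect the paper, which only needs the statement.
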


\begin{lemma}\label{integcalc}
For $K, M> 1$, we have
$$\int_{M}^\infty\dfrac{1}{t^{K}\log{t}}\dint t  = - \Ei{-(K-1)\log{M}},$$
where $\Ei{x} := {\displaystyle \int_{-\infty}^x \dfrac{e^{t}}{t} \ \dint t}$ is the exponential integral. 
\end{lemma}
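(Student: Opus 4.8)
The plan is to reduce the left-hand integral to the defining integral of $\Ei$ by a pair of elementary substitutions. Before doing that I would record that both sides make sense: since $K>1$ and $M>1$, we have $0 < \frac{1}{t^K\log t}\leq \frac{1}{t^K\log M}$ for $t\geq M$, and $\int_M^\infty t^{-K}\dint t<\infty$, so the left-hand integral converges; on the right, $-(K-1)\log M<0$ lies strictly inside $(-\infty,0)$, where the integrand $e^v/v$ of $\Ei$ is continuous and decays at $-\infty$, so $\Ei{-(K-1)\log M}$ is a genuine convergent integral.

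For the main computation I would first substitute $t=e^{s}$, so that $\dint t=e^{s}\dint s$ and $\log t=s$; the limits $t=M$ and $t\to\infty$ become $s=\log M$ and $s\to\infty$, and the integral becomes
$$\int_{M}^\infty\frac{1}{t^{K}\log t}\,\dint t=\int_{\log M}^{\infty}\frac{e^{-(K-1)s}}{s}\,\dint s.$$
Then I would substitute $v=-(K-1)s$, which is a valid change of variables since $K-1>0$; it sends $s=\log M$ to $v=-(K-1)\log M$ and $s\to\infty$ to $v\to-\infty$, while $\dint s=-\frac{1}{K-1}\dint v$ and $s=-\frac{v}{K-1}$. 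The factors of $K-1$ cancel, and after restoring the standard orientation of the limits one gets
$$\int_{\log M}^{\infty}\frac{e^{-(K-1)s}}{s}\,\dint s=\int_{-(K-1)\log M}^{-\infty}\frac{e^{v}}{v}\,\dint v=-\int_{-\infty}^{-(K-1)\log M}\frac{e^{v}}{v}\,\dint v=-\Ei{-(K-1)\log M},$$
which is exactly the claimed identity.

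This is a routine calculation with essentially no obstacle; the only points requiring a moment's care are the sign bookkeeping and the orientation of the limits after the second substitution (the minus sign from $\dint v=-(K-1)\dint s$ is absorbed by swapping the limits back to increasing order). As a sanity check, or as an alternative proof, one could instead differentiate both sides in $M$: writing $F(M)$ for the left side and $G(M)=-\Ei{-(K-1)\log M}$ for the right, the fundamental theorem of calculus gives $F'(M)=-\frac{1}{M^{K}\log M}$, while $\frac{d}{dx}\Ei{x}=e^{x}/x$ together with the chain rule gives $G'(M)=-\frac{1}{M^{K}\log M}$ as well; since $F(M)\to 0$ and $G(M)\to -\Ei{-\infty}=0$ as $M\to\infty$, the two functions coincide.
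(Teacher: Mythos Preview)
Your proof is correct and follows essentially the same route as the paper's own argument: both perform the logarithmic substitution $t=e^s$ to reach $\int_{\log M}^\infty e^{-(K-1)s}/s\,\dint s$ and then a linear rescaling in $s$ to identify this with $-\Ei{-(K-1)\log M}$. The only differences are cosmetic (you take $v=-(K-1)s$ where the paper takes $v=(K-1)u$) together with your added remarks on convergence and the derivative sanity check, neither of which appears in the paper.
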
  
\begin{proof}
\begin{align*}
\int_{M}^\infty\dfrac{1}{t^{K}\log{t}}\dint t  &= \int_{M}^\infty \dfrac{1}{t^{K-1}\log{t}} \ \dint \log{t} \nonumber \\
&= \int_{\log{M}}^\infty \dfrac{e^{(1-K)u}}{u} \dint u  \\
&= (K-1)  \int_{\log{M}}^\infty \dfrac{e^{(1-K)u}}{(K-1)u} \dint u \\
&=  \int_{(K-1) \log{M}}^\infty \dfrac{e^{-v}}{v} \dint v  \\
&=- \Ei{-(K-1)\log{M}}. 
\end{align*}
This completes the proof.
\end{proof}

\begin{lemma}\label{Eibounds}
We have
$$ \dfrac{e^{-x}}{x}-  \dfrac{e^{-x}}{x^2} \leq -\Ei{-x} \leq\dfrac{ e^{-x}}{x}. $$ 
\end{lemma}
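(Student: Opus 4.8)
The plan is to bound $-\Ei{-x} = \int_x^\infty \frac{e^{-v}}{v}\,\dint v$ by comparison with an integral that telescopes cleanly. First I would substitute $v = x + s$ to rewrite $-\Ei{-x} = e^{-x}\int_0^\infty \frac{e^{-s}}{x+s}\,\dint s$; the upper bound is then immediate since $\frac{1}{x+s}\le\frac1x$ and $\int_0^\infty e^{-s}\,\dint s = 1$, giving $-\Ei{-x}\le \frac{e^{-x}}{x}$. For the lower bound I would instead work directly with the original form and integrate by parts: writing $\int_x^\infty \frac{e^{-v}}{v}\,\dint v = \left[-\frac{e^{-v}}{v}\right]_x^\infty - \int_x^\infty \frac{e^{-v}}{v^2}\,\dint v = \frac{e^{-x}}{x} - \int_x^\infty \frac{e^{-v}}{v^2}\,\dint v$, and then bounding the remaining integral from above by $\frac{1}{x^2}\int_x^\infty e^{-v}\,\dint v = \frac{e^{-x}}{x^2}$. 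Combining the two gives $\frac{e^{-x}}{x} - \frac{e^{-x}}{x^2} \le -\Ei{-x} \le \frac{e^{-x}}{x}$, which is exactly the claim.

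Alternatively, one can do the whole thing with a single integration by parts iterated once: $\int_x^\infty \frac{e^{-v}}{v}\,\dint v = \frac{e^{-x}}{x} - \frac{e^{-x}}{x^2} + 2\int_x^\infty \frac{e^{-v}}{v^3}\,\dint v$, and since the last term is nonnegative we get the lower bound, while stopping one step earlier gives the upper bound. I would present whichever is shorter; the substitution argument for the upper bound and one integration by parts for the lower bound seems cleanest.

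There is essentially no obstacle here — this is a standard asymptotic expansion of the exponential integral truncated after one term, and the error-term sign is controlled because the integrand $e^{-v}/v^k$ is positive. The only thing to be slightly careful about is the convergence of the boundary term at infinity (it vanishes since $e^{-v}/v\to 0$) and that all manipulations are valid for $x>0$, which holds in every application in this paper since the argument will be $-(K-1)\log M$ with $K,M>1$.
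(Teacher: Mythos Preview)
Your proof is correct. The paper takes a somewhat different route: it quotes from Abramowitz and Stegun the inequality
\[
\frac{e^{-x}}{2}\log\!\left(1 + \frac{2}{x}\right) \le -\Ei{-x} \le e^{-x}\log\!\left(1 + \frac{1}{x}\right),
\]
and then applies the elementary bounds $t - t^2/2 < \log(1+t) < t$ for $t > 0$ to each side. Your argument is more self-contained and arguably cleaner: a single integration by parts already gives
\[
-\Ei{-x} = \frac{e^{-x}}{x} - \int_x^\infty \frac{e^{-v}}{v^2}\,\dint v,
\]
from which both inequalities follow at once (the remaining integral is nonnegative, giving the upper bound, and is at most $x^{-2}\int_x^\infty e^{-v}\,\dint v = e^{-x}/x^2$, giving the lower bound). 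In particular you do not even need the substitution $v = x+s$ for the upper bound. The paper's approach has the minor advantage of citing a standard reference, while yours avoids that dependency and is no longer.
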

\begin{proof}
Due to  \cite[p. 229, 5.1.20]{Eiref}, we know that, for $x > 0$, 
$$ \dfrac{e^{-x}}{2}\log\left(1 + \dfrac{2}{x}\right) \leq -\Ei{-x} \leq e^{-x}\log\left(1 + \dfrac{1}{x}\right).$$ Note that $t - t^2/2 < \log (1 + t) < t $ for $t > 0$. Hence, 
$$ \dfrac{e^{-x}}{x}-  \dfrac{e^{-x}}{x^2}  \leq \dfrac{e^{-x}}{2}\log\left(1 + \dfrac{2}{x}\right) \leq -\Ei{-x} \leq e^{-x}\log\left(1 + \dfrac{1}{x}\right) \leq \dfrac{ e^{-x}}{x}. \qedhere$$
\end{proof}
We now apply these lemmas to bound $S_{1, m}(r)$ from below.  
\begin{lemma}\label{S1est}
For sufficiently large $r$ and $Q = p_m $, 
$$S_{1, m}(r) \gg \dfrac{1}{r(Q + Q^{11/20})^{r-1}\log Q}.$$
\end{lemma}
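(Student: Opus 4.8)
The plan is to use the integral form of $S_{1,m}(r)$ from Lemma~\ref{S1intform} together with the lower bound on prime gaps from Theorem~\ref{pidiflower}. Starting from
$$S_{1, m}(r) = r\int_{Q}^\infty\dfrac{\pi(t) - \pi(Q)}{t^{r+1}}\dint t,$$
I would restrict the integral to a range where Theorem~\ref{pidiflower} applies. Setting $x = Q$ and $y = t - Q$, the hypothesis $y > x^{11/20}$ becomes $t > Q + Q^{11/20}$, so for such $t$ (and $Q$, hence $r$, large enough) we have $\pi(t) - \pi(Q) \gg \dfrac{t - Q}{\log t}$. Discarding the nonnegative part of the integrand on $[Q, Q+Q^{11/20}]$ gives
$$S_{1,m}(r) \gg r\int_{Q + Q^{11/20}}^\infty \dfrac{t - Q}{t^{r+1}\log t}\dint t.$$

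Next I would bound $\log t \le \log\big((Q+Q^{11/20})\cdot\text{(something)}\big)$ from above — more carefully, since $t$ ranges to infinity, $\log t$ in the denominator is awkward, so instead I would replace $t - Q$ in the numerator by something that lets me factor out the tail. One clean route: write $t - Q \geq Q^{11/20}$ is false for $t$ near the lower limit, so instead split as $t - Q = t(1 - Q/t)$ and note that on $[Q+Q^{11/20}, 2Q]$ (say) we have $1 - Q/t \gg Q^{11/20}/t \gg Q^{-9/20}$, or more simply just bound $\log t$ crudely. Actually the cleanest approach: shift the lower limit down to $M := Q + Q^{11/20}$ and use $t - Q \ge t - M + (M - Q) \ge M - Q = Q^{11/20}$ is again false... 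The honest move is to keep $t-Q$ and integrate by parts or directly compare: since $\dfrac{t-Q}{t^{r+1}} = \dfrac{1}{t^r} - \dfrac{Q}{t^{r+1}}$, we get
$$S_{1,m}(r) \gg r\int_{M}^\infty \dfrac{1}{t^{r}\log t}\dint t - rQ\int_{M}^\infty \dfrac{1}{t^{r+1}\log t}\dint t,$$
with $M = Q + Q^{11/20}$, and now Lemma~\ref{integcalc} evaluates each integral as $-\Ei{-(r-1)\log M}$ and $-\Ei{-r\log M}$ respectively. Applying the lower bound from Lemma~\ref{Eibounds} to the first (positive) term and the upper bound to the second (subtracted) term, I expect the main term to be of size $\dfrac{r}{(r-1)M^{r-1}\log M} \cdot(1 + o(1))$, and the subtracted term to be of order $\dfrac{Q}{M^{r}\log M} \approx \dfrac{1}{M^{r-1}\log M}$, smaller by a factor of $r$; so their difference is still $\gg \dfrac{1}{M^{r-1}\log M} = \dfrac{1}{(Q+Q^{11/20})^{r-1}\log(Q + Q^{11/20})}$. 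Since $\log(Q + Q^{11/20}) \asymp \log Q$, and restoring the factor of $r$ that appears in the lemma's statement (the claimed bound has $r$ in the denominator, which matches once one is careful about where the factor $r/(r-1)$ lands versus the overall $r$ out front — effectively the $r$ out front cancels one power and leaves the stated $\frac{1}{r(\cdots)^{r-1}\log Q}$ after absorbing constants), we obtain the claim.

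The main obstacle I anticipate is bookkeeping the competition between the two terms carefully enough to be sure the positive term dominates: both $-\Ei{-(r-1)\log M}$ and $-\Ei{-r\log M}$ are exponentially small in $r\log M$, and they differ only by a factor roughly $M = Q+Q^{11/20}$ in magnitude together with the shift $r \to r-1$ in the exponent, which itself contributes a factor $M$; so one must verify that $r \cdot (-\Ei{-(r-1)\log M})$ genuinely beats $rQ\cdot(-\Ei{-r\log M})$ with room to spare, i.e. that $\dfrac{1}{(r-1)M^{r-1}} - \dfrac{Q}{rM^r}\big(1+o(1)\big) \gg \dfrac{1}{rM^{r-1}}$, which holds because $Q/M = Q/(Q+Q^{11/20}) < 1$ so the subtracted term is at most $\dfrac{1}{rM^{r-1}}(1+o(1))$ while the first is $\dfrac{1}{(r-1)M^{r-1}}$ — the gap $\frac1{r-1} - \frac1r = \frac{1}{r(r-1)}$ is unfortunately too small, so one must instead use that $Q/M \le 1 - \tfrac12 Q^{-9/20}$ to save an extra factor, making the subtracted term strictly smaller and leaving a clean $\gg \dfrac{1}{rM^{r-1}\log M}$ lower bound. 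Handling that saving correctly is the delicate point; everything else is the routine application of the three preceding lemmas.
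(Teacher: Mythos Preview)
Your approach is exactly the paper's: write $S_{1,m}(r)$ via Lemma~\ref{S1intform}, truncate at $M=Q+Q^{11/20}$, apply Theorem~\ref{pidiflower}, split $t-Q$ as $t-Q = t\cdot 1 - Q$, evaluate both integrals with Lemma~\ref{integcalc}, and bound the two $\Ei$ terms using Lemma~\ref{Eibounds}. So the strategy is correct and identical.

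Your ``main obstacle'' paragraph, however, contains a bookkeeping slip that leads you to invent an unnecessary rescue. After multiplying by the outer factor $r$, the positive term is $\dfrac{r}{(r-1)M^{r-1}\log M}\bigl(1-\tfrac{1}{(r-1)\log M}\bigr)$ and the subtracted term is at most $\dfrac{Q}{M^{r}\log M}\le \dfrac{1}{M^{r-1}\log M}$ (simply because $Q\le M$). Their difference is therefore at least
\[
\frac{1}{(r-1)M^{r-1}\log M}\left[1-\frac{r}{(r-1)\log M}\right],
\]
and since $\log M\ge \log(2+2^{11/20})>1.24$ the bracket is bounded below by a positive constant for large $r$. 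This already gives $S_{1,m}(r)\gg \dfrac{1}{(r-1)M^{r-1}\log M}\asymp \dfrac{1}{r\,M^{r-1}\log Q}$, which is the claim. You set yourself the target $\gg \dfrac{1}{rM^{r-1}}$ for the bracket \emph{before} the outer $r$, which is a factor of $r$ too strong; that is why the gap $\tfrac{1}{r-1}-\tfrac{1}{r}=\tfrac{1}{r(r-1)}$ looked insufficient to you. With the correct target, no further saving from $Q/M\le 1-\tfrac12 Q^{-9/20}$ is needed (and in the intended application $Q^{-9/20}\asymp (\log r)/r$, so that saving would be smaller than $1/r$ anyway).

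The paper makes this transparent by immediately replacing $Q$ with $Q_0:=M$ in the subtracted term (valid since it only increases what is being subtracted), so that everything is expressed in $Q_0$ and the algebra collapses to the single bracketed factor above.
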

\begin{proof}
Set $Q_0 := Q(1 + Q^{-9/20})$ for convenience. Using Lemmas \ref{S1intform}, \ref{pidiflower} and  \ref{integcalc},  we get 
\begin{align*}
S_{1, m}(r) &= r\int_Q^\infty\dfrac{\pi(t) - \pi(Q)}{t^{r+1}}\dint t \\
&\geq r\int_{Q_0}^\infty\dfrac{\pi(t) - \pi(Q)}{t^{r+1}}\dint t  \\
&\gg  r\int_{Q_0}^\infty\dfrac{t - Q}{t^{r+1}\log t}\dint t \\
&= r\int_{Q_0}^\infty\dfrac{1 }{t^{r}\log t}\dint t - r\int_{Q_0}^\infty\dfrac{Q}{t^{r+1}\log t}\dint t \\
&=r\left[ -\Ei{-(r-1)\log(Q_0)} - Q(- \Ei{-r\log Q_0}) \right]\\
&\geq r\left[ -\Ei{-(r-1)\log(Q_0)} - Q_0(-\Ei{-r\log Q_0} )\right].
\end{align*}
Applying Lemma \ref{Eibounds}, we see that 
\begin{align*}
 &r\left[ -\Ei{-(r-1)\log(Q_0)} - Q_0(-\Ei{-r\log Q_0} )\right] \\
\geq \ &r\left[ \dfrac{1}{Q_0^{r-1}(r-1)\log Q_0}-\dfrac{1}{Q_0^{r-1}(r-1)^2 \log^2(Q_0)} - \dfrac{1}{rQ_0^{r-1}\log(Q_0)} \right] \\
= \ &r\left[ \dfrac{1}{Q_0^{r-1}r(r-1)\log Q_0}-\dfrac{1}{Q_0^{r-1}(r-1)^2 \log^2(Q_0)}\right] \\
= \ &\dfrac{1}{(r-1)Q_0^{r-1}\log Q_0}\left[1 -\dfrac{r}{(r-1)\log(Q_0)}\right] \\
\gg \ &\dfrac{1}{(r-1)Q_0^{r-1}\log Q_0}\ .
\end{align*}
Note that $\log Q_0 = \log Q + \log(1 + Q^{-9/11}) \ll \log Q, $ so $$\dfrac{1}{(r-1)Q_0^{r-1}\log Q_0}  \gg \dfrac{1}{(r-1)Q^{r-1}(1 + Q^{-9/11})^{r-1}\log Q},$$ which gives the desired result. 
\end{proof}

\begin{lemma}
Let $0<w <11/9$. For sufficiently large $r$, if $Q$ is prime and $Q> \left(\dfrac{r}{w \log r}\right)^{20/9}$, then $Q$ is not $r$-mighty. 
\end{lemma}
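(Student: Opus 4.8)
The plan is to apply the non-mightiness criterion of Lemma~\ref{rephrased}: it suffices to show that $S_{1,m}(r) - S_{1,m}(r)^2/2 > \dfrac{1}{Q^r+1}$ for $Q = p_m$. For $r$ large we have $S_{1,m}(r) \leq \zeta(r) - 1 < 1$, so $S_{1,m}(r) - S_{1,m}(r)^2/2 \geq \tfrac{1}{2} S_{1,m}(r)$; since also $\dfrac{1}{Q^r+1} < \dfrac{1}{Q^r}$, the claim reduces to proving $Q^r S_{1,m}(r) > 2$ for every prime $Q > \left(\dfrac{r}{w\log r}\right)^{20/9}$, provided $r$ is sufficiently large.

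The next step is to insert the lower bound of Lemma~\ref{S1est}, namely $S_{1,m}(r) \gg \dfrac{1}{r(Q + Q^{11/20})^{r-1}\log Q}$. Factoring $(Q + Q^{11/20})^{r-1} = Q^{r-1}(1 + Q^{-9/20})^{r-1}$ and multiplying through by $Q^r$ gives $Q^r S_{1,m}(r) \gg \dfrac{Q}{r(1 + Q^{-9/20})^{r-1}\log Q}$. The role of the hypothesis $Q > \left(\dfrac{r}{w\log r}\right)^{20/9}$ is precisely to tame the middle factor: it yields $Q^{-9/20} < \dfrac{w\log r}{r}$, whence $(1 + Q^{-9/20})^{r-1} \leq \exp\!\left((r-1)Q^{-9/20}\right) < \exp(w\log r) = r^{w}$, and therefore $Q^r S_{1,m}(r) \gg \dfrac{Q}{r^{1+w}\log Q}$.

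To finish, I would use that $t \mapsto t/\log t$ is increasing for $t > e$ together with $Q > Q_0 := \left(\dfrac{r}{w\log r}\right)^{20/9}$ and $\log Q_0 \leq \tfrac{20}{9}\log r$ to get $\dfrac{Q}{\log Q} \geq \dfrac{Q_0}{\log Q_0} \gg \dfrac{r^{20/9}}{(\log r)^{29/9}}$, with a constant depending on $w$, which is harmless. Hence $Q^r S_{1,m}(r) \gg \dfrac{r^{11/9 - w}}{(\log r)^{29/9}}$, and the exponent $11/9 - w$ is positive exactly because $w < 11/9$, so the right-hand side tends to infinity with $r$ and in particular eventually exceeds $2$ for all admissible $Q$ simultaneously. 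The one point that needs care is this simultaneity: the implied constant and the ``$r$ sufficiently large'' threshold inherited from Lemma~\ref{S1est} (and from Theorem~\ref{pidiflower} behind it) must be uniform in the particular prime $Q$. This is automatic, since those results only require $Q$ itself to be large and every admissible $Q$ satisfies $Q > Q_0 \to \infty$ as $r \to \infty$; I expect this uniformity bookkeeping, rather than any single inequality, to be the main thing to watch.
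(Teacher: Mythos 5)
Your proof is correct and follows essentially the same route as the paper's: reduce to the criterion $S_{1,m}(r) - S_{1,m}(r)^2/2 > 1/(Q^r+1)$, invoke Lemma~\ref{S1est}, exploit $Q > (r/(w\log r))^{20/9}$ to bound $(1+Q^{-9/20})^{r-1} < r^w$, and conclude from $1 + w < 20/9$. The only organizational difference is that the paper reduces to the boundary case $Q = (r/(w\log r))^{20/9}$ via a monotonicity observation (the relevant LHS is increasing in $Q$ and RHS decreasing), whereas you bound the two $Q$-dependent factors directly for every admissible $Q$; your uniformity remark is the right thing to flag and is indeed harmless since all admissible $Q$ tend to infinity with $r$.
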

\begin{proof}
Since $S_{1, m}(r)$ tends to zero as $r$ goes to infinity, for sufficiently large $r$,  $$S_{1, m}(r) - S_{1, m}(r)^2/2 >S_{1, m}(r)/2 \gg S_{1, m}(r)\gg  \dfrac{1}{rQ^{r-1}(1 + Q^{-9/11})^{r-1}\log Q}$$
by Lemma \ref{S1est}. Hence, by Lemma \ref{rephrased}, there exists some positive constant $k$ such that $Q$ is not $r$-mighty for $Q$ satisfying 
$$Q^r  > kr Q^{r-1}(1 + Q^{-9/20})^{r-1}\log Q$$
or, equivalently,
\begin{align}\label{shosho}
\frac{Q}{\log Q} >  k r(1 + Q^{-9/20})^{r-1}.
\end{align}
 We claim that for large enough $r$, (\ref{shosho}) holds for $Q> \left(\dfrac{r}{w \log r}\right)^{20/9}.$ 
When $Q>e$, the left-hand side is increasing in $Q$ while the right-hand side is decreasing in $Q$. Therefore, it is sufficient to prove the inequality when $Q=\left(\dfrac{r}{w\log r}\right)^{20/9}$ and $r$ is sufficiently large. In this case, the inequality becomes
\begin{align}
&\left(\frac{r}{w \log r}\right)^{20/9} > \frac{20 k r }{9} (\log r - \log\log r - \log w)\left( 1 + \frac{w \log r}{r}\right)^{r-1}. \label{abc}
\end{align}
Note that $$\left( 1 + \dfrac{w \log r}{r}\right)^{r-1} = \exp\left[\log\left(1 + \dfrac{w\log r}{r}\right)\left(r-1\right)\right] < \exp\left[\frac{\left(r-1\right)w\log r}{r}\right] \leq r^w.$$
Hence, the right-hand side of (\ref{abc}) is less than a positive constant times $r^{(1 + w)} \log r$. In the meantime the left-hand side is a positive constant times $\dfrac{r^{20/9}}{(\log r)^{20/9}}$. Since by assumption $1 + w < 20/9 $, (\ref{abc}) must hold for large $r$.
\end{proof}
This proves that for any positive constant $w$ with $w < 11/9$, we have $P_r  <  \left(\dfrac{r}{w \log r}\right)^{20/9}$ for large $r$.

\section{Bounds on the Number of Intervals}
 Recall that $P_r$ is the largest $r$-mighty prime and that $N_r$ is the number of $r$-mighty primes. Furthermore, recall that for each prime $p$, we define $\sigma_{-r}(p^\infty)=\dfrac{1}{1-p^{-r}}$. In this section, we will estimate the number $C_r$ of connected components of $\overline{\sigma_{-r}(\N)}$ using the bounds on $P_r$ and $N_r$.
\begin{theorem}\label{mingaps}
We have
$$C_r \geq N_r + 1 \geq \pi(r) + 1.$$
\end{theorem}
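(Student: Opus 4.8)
The plan is to attach to every $r$-mighty prime a genuine gap in $\overline{\sigma_{-r}(\N)}$, to show that the gaps produced by distinct mighty primes are pairwise disjoint and linearly ordered, and then to exhibit $N_r+1$ explicit points of $\overline{\sigma_{-r}(\N)}$ lying one in each of the $N_r+1$ regions these gaps cut out; the remaining inequality $N_r+1\geq\pi(r)+1$ will then follow from Section 1.

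The step requiring the key idea is the gap lemma: if $Q=p_m$ is $r$-mighty then $\sigma_{-r}(n)$ never lies in the open interval
$$G_Q:=\left(\ \textstyle\prod_{t=m+1}^{\infty}\bigl(1-p_t^{-r}\bigr)^{-1},\ \ 1+Q^{-r}\ \right),$$
which is nonempty precisely because $Q$ is $r$-mighty; as $G_Q$ is open this gives $\overline{\sigma_{-r}(\N)}\cap G_Q=\emptyset$. To prove it I would fix $n$ and split on whether $n$ has a prime factor $\leq Q$. If not, every prime factor of $n$ exceeds $p_m$, so by multiplicativity $\sigma_{-r}(n)\leq\prod_{t>m}(1-p_t^{-r})^{-1}$, the left endpoint of $G_Q$. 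If $n$ has a prime factor $P\leq Q$, then $P^a\|n$ for some $a\geq1$, and by multiplicativity together with $P\leq Q$ we get $\sigma_{-r}(n)\geq\sigma_{-r}(P^a)\geq\sigma_{-r}(P)=1+P^{-r}\geq1+Q^{-r}$, the right endpoint. Hence $\sigma_{-r}(n)\notin G_Q$. This case analysis is the one place I would be most careful.

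Next I would record two order facts. First, for mighty primes $Q<Q'$ the interval $G_{Q'}$ lies strictly to the left of $G_Q$: its right endpoint $1+Q'^{-r}$ is below the left endpoint $\prod_{p>Q}(1-p^{-r})^{-1}$ of $G_Q$, since $Q'$ is one of the primes exceeding $Q$ and hence that product is at least $(1-Q'^{-r})^{-1}>1+Q'^{-r}$. So, listing the mighty primes as $q_1<\cdots<q_{N_r}$, the gaps are disjoint and ordered $G_{q_{N_r}}<\cdots<G_{q_1}$. Second, consider the $N_r+1$ values $\sigma_{-r}(1)=1$ and $\sigma_{-r}(q_k)=1+q_k^{-r}$ for $k=1,\dots,N_r$, all lying in $\overline{\sigma_{-r}(\N)}$; sorted they read $1<1+q_{N_r}^{-r}<\cdots<1+q_1^{-r}$, and the same estimate shows $G_{q_k}$ is contained in the open interval between $1+q_{k+1}^{-r}$ and $1+q_k^{-r}$ (and $G_{q_{N_r}}$ between $1$ and $1+q_{N_r}^{-r}$). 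Thus between any two consecutive of these $N_r+1$ values sits a nonempty open set disjoint from $\overline{\sigma_{-r}(\N)}$; since connected subsets of $\R$ are intervals, the values lie in $N_r+1$ distinct components, so $C_r\geq N_r+1$.

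Finally, for $N_r\geq\pi(r)$: by Theorem \ref{mainlower} every prime $<r$ is $r$-mighty, which already gives at least $\pi(r)-1$ mighty primes, and by Corollary \ref{Mrt} we have $P_r>r$, so there is in addition a mighty prime exceeding $r$; hence $N_r\geq(\pi(r)-1)+1=\pi(r)$. (When $r\leq\eta$ there are no mighty primes at all, but then $\pi(r)=0$ and the claim is trivial.) Combining with the previous paragraph, $C_r\geq N_r+1\geq\pi(r)+1$.
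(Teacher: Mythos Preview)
Your proof is correct and follows essentially the same line as the paper's. Both arguments hinge on the same gap lemma: for an $r$-mighty prime $Q=p_m$, the open interval $\bigl(\prod_{t>m}(1-p_t^{-r})^{-1},\,1+Q^{-r}\bigr)$ misses $\sigma_{-r}(\N)$, proved by the identical case split on whether $n$ has a prime factor $\le Q$.

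The only stylistic difference is in how the $N_r+1$ components are certified. The paper observes that both endpoints of each gap lie in $\overline{\sigma_{-r}(\N)}$ (one as $\sigma_{-r}(Q)$, the other as a limit of values), so each gap is a genuine maximal open interval of the complement; distinctness then follows because the right endpoints $1+Q^{-r}$ differ. You instead order the gaps explicitly and exhibit the $N_r+1$ points $1,\,1+q_{N_r}^{-r},\ldots,1+q_1^{-r}$ separated by them. Both routes are fine; yours makes the disjointness of the gaps more explicit, while the paper's endpoint-in-closure observation is what you implicitly use when you say the values $\sigma_{-r}(q_k)$ lie in $\overline{\sigma_{-r}(\N)}$. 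For the final inequality $N_r\ge\pi(r)$ the paper simply invokes Corollary~\ref{Mrt}, and your more detailed justification via Theorem~\ref{mainlower} together with $P_r>r$ amounts to unpacking that corollary.
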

\begin{proof}
Let $Q = p_m$ be an $r$-mighty prime, that is, $$1 + Q^{-r} > \prod\limits_{t = m+1}^\infty \dfrac{1}{1 - p_t^{-r}}.$$ Let $N \in \N$. Suppose $N$ has a prime divisor $q \leq Q.$ Then $$\sigma_{-r}(N) \geq 1 + q^{-r} \geq 1 + Q^{-r}.$$
On the other hand, suppose that all prime divisors of $N$ are larger than $Q$. Note that 
$$\prod\limits_{t = m+1}^\infty \dfrac{1}{1 - p_t^{-r}} = \prod\limits_{t = m+1}^\infty\left(1 + \dfrac{1}{p_{t}^r} +  \dfrac{1}{p_{t}^{2r}} + \cdots \right),$$ and if we expand the product we will get all possible terms of the form $(q_1^{\alpha_1}\cdots q_k^{\alpha_k})^{-r}$ with prime $q_i > Q$. Hence, in this case 
$$\sigma_{-r}(N) \leq  \prod\limits_{t = m+1}^\infty \dfrac{1}{1 - p_t^{-r}}.$$ 
Lastly, note that both $1 + Q^{-r}$ and $\prod\limits_{t = m+1}^\infty \dfrac{1}{1 - p_t^{-r}}$ themselves are in $\overline{\sigma_{-r}(\N)}$. Indeed, 
$$1 + Q^{-r} = \sigma_{-r}(Q) \in\sigbar$$ and 
\begin{align*}
 \prod\limits_{t = m+1}^\infty \dfrac{1}{1 - p_t^{-r}} = &\lim_{s \to \infty}  \prod\limits_{t = m+1}^{m + s} \left(1 + \dfrac{1}{p_{t}^r} +  \dfrac{1}{p_{t}^{2r}} + \cdots + \dfrac{1}{p_{t}^{sr}} \right)\\ = 
&\lim_{s \to \infty}\sigma_{-r}(p_{m+1}^s\cdots p_{m+s}^s) \in \sigbar.
\end{align*}
Hence, $\sigbar$ has a gap $\left( \prod\limits_{t = m+1}^\infty \dfrac{1}{1 - p_t^{-r}}, 1 + Q^{-r}\right)$ (where by a \emph{gap} of the closed set $K\subseteq\R$, we mean a bounded connected component of $\R\setminus K$). It follows that the total number of gaps of $\overline{\sigma_{-r}(\N)}$ is at least the number $N_r$ of $r$-mighty primes, so the number of connected components $C_r$ is at least $N_r + 1$. Lastly, by Corollary \ref{Mrt}, $N_r + 1 \geq \pi(r) + 1$. 
\end{proof}

In order to bound $C_r$ from above, we will use the algorithm of Sanna \cite{Sannapaper}. 
\begin{defn}
Define
$$\N_{j} = \{n  \in \N \ \vert \ n \text{ has no prime divisors less than } p_{j + 1}\}.$$
\end{defn}
Let $L_r \in \N$ be the index of the largest $r$-mighty prime (so $P_r = p_{L_r}$). In \cite{Sannapaper}, Sanna proved the following theorems.
\begin{theorem}[\cite{Sannapaper}, Lemma 2.3]\label{bananafish}
$$\overline{\sigma_{-r}(\N_{L_r})} = \left[1,    \prod\limits_{t = L_r+1}^\infty \dfrac{1}{1 - p_t^{-r}}\right].$$ 
\end{theorem}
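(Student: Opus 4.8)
The plan is to prove the two set inclusions separately; write $T_j := \prod_{t=j+1}^{\infty}\frac{1}{1-p_t^{-r}}$, so that the claim reads $\overline{\sigma_{-r}(\N_{L_r})} = [1, T_{L_r}]$. The inclusion ``$\subseteq$'' is the routine one: any $n\in\N_{L_r}$ factors as a finite product $n=\prod_{i\ge 1}p_{L_r+i}^{\alpha_i}$, so by multiplicativity $\sigma_{-r}(n)=\prod_i\bigl(1+p_{L_r+i}^{-r}+\cdots+p_{L_r+i}^{-\alpha_i r}\bigr)\le\prod_{i\ge 1}\frac{1}{1-p_{L_r+i}^{-r}}=T_{L_r}$, while $\sigma_{-r}(n)\ge 1$ trivially; since $[1,T_{L_r}]$ is closed, $\overline{\sigma_{-r}(\N_{L_r})}\subseteq[1,T_{L_r}]$.

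For the reverse inclusion it is enough, $[1,T_{L_r}]$ being closed, to show that $\sigma_{-r}(\N_{L_r})$ is dense in $[1,T_{L_r}]$. I would do this by a greedy construction. Fix a target $x\in[1,T_{L_r}]$ and process the primes $p_{L_r+1},p_{L_r+2},\dots$ in increasing order; at step $k$, with $q=p_{L_r+k}$ and running product $P_{k-1}$ (so $P_0=1$), choose the exponent $\alpha_k\ge 0$ as large as possible subject to keeping $P_k:=P_{k-1}\bigl(1+q^{-r}+\cdots+q^{-\alpha_k r}\bigr)\le x$, with the convention that if the whole geometric series $\tfrac{1}{1-q^{-r}}$ still keeps us below $x$ we pick $\alpha_k$ merely large enough that $P_k\ge(1-\eps 2^{-k})\,P_{k-1}\tfrac{1}{1-q^{-r}}$. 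Each $P_k=\sigma_{-r}(n_k)$ with $n_k=\prod_{i=1}^{k}p_{L_r+i}^{\alpha_i}\in\N_{L_r}$, so it suffices to control $P_k$. The invariant I would maintain is
$$1\ \le\ \frac{x}{P_k}\ \le\ T_{L_r+k}\cdot\prod_{\ell\le k}\Bigl(1-\eps2^{-\ell}\Bigr)^{-1},$$
where the last factor only picks up contributions from steps of the second (``whole series'') type. It holds at $k=0$ because $x\le T_{L_r}$. For the inductive step, in the generic case maximality of $\alpha_k$ forces $x/P_{k-1}<1+q^{-r}+\cdots+q^{-(\alpha_k+1)r}$, whence $x/P_k<1+q^{-(\alpha_k+1)r}\le 1+q^{-r}=1+p_{L_r+k}^{-r}$; in the ``whole series'' case one instead loses only the stated factor $(1-\eps2^{-k})^{-1}$. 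Letting $k\to\infty$, the tail product $T_{L_r+k}\to 1$, so $x/P_k\to 1$ up to the bounded error $\prod_\ell(1-\eps2^{-\ell})^{-1}\le e^{2\eps}$; since also $P_k\le x$, some $\sigma_{-r}(n_k)$ lies within $O(\eps)$ of $x$, and letting $\eps\to 0$ gives density.

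The crucial input — and the step I expect to be the main obstacle to state cleanly — is the passage ``$x/P_k<1+p_{L_r+k}^{-r}\implies x/P_k\le T_{L_r+k}$'' in the inductive step. This is exactly where one uses that every prime $p_{L_r+k}$ with $k\ge 1$ lies strictly above the largest $r$-mighty prime $P_r=p_{L_r}$ and is therefore \emph{not} $r$-mighty, i.e. $1+p_{L_r+k}^{-r}\le\prod_{t=L_r+k+1}^{\infty}\frac{1}{1-p_t^{-r}}=T_{L_r+k}$ by the very definition of $r$-mightiness. Note that this single ``first-jump'' inequality is all that is needed: since the bound $x/P_k<1+q^{-(\alpha_k+1)r}$ is weakest exactly when $\alpha_k=0$, higher exponents only make the invariant easier to keep. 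The mechanism is precisely Defant's criterion that $C_r=1$ if and only if no prime is $r$-mighty, transplanted to the shifted prime sequence $p_{L_r+1},p_{L_r+2},\dots$ — for which the hypothesis of that criterion is guaranteed by the maximality built into the definition of $P_r$ — and the remaining work is the routine bookkeeping of the small, summable error introduced because $\tfrac{1}{1-q^{-r}}$ is a supremum that $\sigma_{-r}(q^\alpha)$ never attains.
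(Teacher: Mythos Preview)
The paper does not prove this statement; it is quoted verbatim as Lemma~2.3 of Sanna's paper and used as a black box. So there is no proof in the present paper to compare yours against.

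That said, your argument is correct and is precisely the natural one. The containment $\overline{\sigma_{-r}(\N_{L_r})}\subseteq[1,T_{L_r}]$ is routine, and for density your greedy construction works: the decisive step is the observation that every prime $p_{L_r+k}$ with $k\ge 1$ lies strictly above the largest $r$-mighty prime and is therefore \emph{not} $r$-mighty, i.e.\ $1+p_{L_r+k}^{-r}\le T_{L_r+k}$, which is exactly what lets the invariant $x/P_k\le T_{L_r+k}$ survive the inductive step. This is indeed the shifted version of Defant's connectedness criterion, and it is the mechanism behind Sanna's proof as well.

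One cosmetic simplification: the $\eps$-bookkeeping you introduce to handle the fact that $\sigma_{-r}(q^{\alpha})$ never attains $\tfrac{1}{1-q^{-r}}$ can be eliminated by allowing $\alpha_k\in\Z_{\ge 0}\cup\{\infty\}$ with the convention $\sigma_{-r}(q^{\infty}):=\tfrac{1}{1-q^{-r}}$. Each $P_k$ is then a finite product of such factors and already lies in $\overline{\sigma_{-r}(\N_{L_r})}$ (being a limit of genuine $\sigma_{-r}$-values, as the paper itself observes in the proof of Theorem~\ref{mingaps}). With that convention every step is of your ``generic'' type, the invariant reads simply $1\le x/P_k\le T_{L_r+k}$ with no error term, and $T_{L_r+k}\to 1$ forces $P_k\to x$ directly.
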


\begin{theorem}[\cite{Sannapaper}, Lemma 2.4]\label{inductstep}
$$\overline{\sigma_{-r}(\N_{K})} = \bigcup_{i \in \Z_{\geq 0} \cup \{\infty\}} \sigma_{-r}(p_{K+1}^i) \cdot \overline{\sigma_{-r}(\N_{K+1})},$$ where we write $a \cdot X =\{ax \ \vert \ x\in X\}$ for a number $a$ and a set $X$. 
\end{theorem}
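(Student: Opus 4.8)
The plan is to peel off the prime $p_{K+1}$ using unique factorization — reducing to a plain set identity — and then to handle the passage to topological closures on the resulting infinite union by a compactness argument.

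\emph{Step 1: a closure-free identity.} First I would note that every $n \in \N_K$ factors uniquely as $n = p_{K+1}^{\,i}\,m$ with $i \in \Z_{\geq 0}$ and $m \in \N_{K+1}$: take $i$ to be the exact exponent of $p_{K+1}$ in $n$, so that $m = n/p_{K+1}^{\,i}$ has every prime factor $\geq p_{K+1}$ and none equal to $p_{K+1}$, i.e.\ every prime factor $\geq p_{K+2}$; conversely $p_{K+1}^{\,i}m \in \N_K$ for any such $i$ and $m$. Since $\gcd(p_{K+1}^{\,i},m)=1$ and $\sigma_{-r}$ is multiplicative, $\sigma_{-r}(n) = \sigma_{-r}(p_{K+1}^{\,i})\,\sigma_{-r}(m)$, and ranging over $n \in \N_K$ yields
$$\sigma_{-r}(\N_K) \;=\; \bigcup_{i \in \Z_{\geq 0}} \sigma_{-r}(p_{K+1}^{\,i})\cdot \sigma_{-r}(\N_{K+1}).$$

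\emph{Step 2: the right-hand side is closed.} Write $a_i := \sigma_{-r}(p_{K+1}^{\,i})$ for $i \in \Z_{\geq 0}$, $a_\infty := \sigma_{-r}(p_{K+1}^\infty) = (1-p_{K+1}^{-r})^{-1}$, and $X := \overline{\sigma_{-r}(\N_{K+1})}$, which is compact (closed and bounded above by $\zeta(r)$); note that $a_i$ increases to $a_\infty$. Let $R := \bigcup_{i \in \Z_{\geq 0}\cup\{\infty\}} a_i X$ denote the right-hand side of the statement; each $a_i X$ is compact. Given $y_n \in R$ with $y_n \to y$, write $y_n = a_{i_n}z_n$ with $z_n \in X$. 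After passing to a subsequence, either $(i_n)$ is constant, say $\equiv i$, in which case $y \in a_i X$ because $a_i X$ is closed, or $i_n \to \infty$, in which case a further subsequence has $z_n \to z \in X$ by compactness and then $y = \lim a_{i_n}z_n = a_\infty z \in a_\infty X$. Either way $y \in R$, so $R$ is closed. This is exactly where the $i=\infty$ term is forced into the statement.

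\emph{Step 3: both inclusions.} By Step 1, $\sigma_{-r}(\N_K) \subseteq R$, and as $R$ is closed (Step 2), $\overline{\sigma_{-r}(\N_K)} \subseteq R$. For the reverse inclusion, fix $i \in \Z_{\geq 0}$ and $x \in X$; choose $m_n \in \N_{K+1}$ with $\sigma_{-r}(m_n) \to x$ and observe that $p_{K+1}^{\,i}m_n \in \N_K$ with $\sigma_{-r}(p_{K+1}^{\,i}m_n) = a_i\,\sigma_{-r}(m_n) \to a_i x$, so $a_i x \in \overline{\sigma_{-r}(\N_K)}$; hence $a_i X \subseteq \overline{\sigma_{-r}(\N_K)}$ for every finite $i$. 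Finally, for $x \in X$ we have $a_\infty x = \lim_{i\to\infty} a_i x$, a limit of points of the closed set $\overline{\sigma_{-r}(\N_K)}$, so $a_\infty x \in \overline{\sigma_{-r}(\N_K)}$ as well. Thus $R \subseteq \overline{\sigma_{-r}(\N_K)}$, and equality follows. The one genuinely delicate point is the interaction of the closure with the infinite union — both that $R$ is already closed and that its $i=\infty$ piece already lies in $\overline{\sigma_{-r}(\N_K)}$ — and both facets reduce to the single fact that $a_i \to a_\infty$ combined with the compactness of $X$.
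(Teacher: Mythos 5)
Your proof is correct. Note that the paper does not prove this statement at all --- it is quoted verbatim from Sanna's paper (Lemma 2.4 of \cite{Sannapaper}) --- so there is no in-paper argument to compare against; your three steps (the exact factorization $n = p_{K+1}^i m$ with $m \in \N_{K+1}$ plus multiplicativity, the compactness argument showing the union including the $i=\infty$ term is already closed, and the two inclusions) constitute a complete and self-contained proof of the cited result.
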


\begin{theorem}[\cite{Sannapaper},  Lemma 2.5]\label{structurethm}
Let $I = \left[a, b\right]$ be an interval $a > b > 1.$ Let $p $ be a prime. Let $t$ be the least non-negative integer such that $$\dfrac{\sigma_{-r}(p^{t+1})}{\sigma_{-r}(p^{t})} \leq \dfrac{b}{a}.$$ Then the following is a decomposition into disjoint closed intervals:
$$\bigcup_{i \in \Z_{\geq 0} \cup \{\infty\}} \sigma_{-r}(p^i) \cdot I = \bigcup_{0\leq  i < t} \sigma_{-r}(p^i) \cdot I \cup \left[a\sigma_{-r}(p^t), b\sigma_{-r}(p^\infty)\right].$$ 
\end{theorem}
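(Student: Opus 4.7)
The plan is to exploit the fact that each term in the union on the left is literally a scaling of $I$: namely $\sigma_{-r}(p^i)\cdot I = [a\sigma_{-r}(p^i),\, b\sigma_{-r}(p^i)]$, with $\sigma_{-r}(p^i)$ strictly increasing in $i$ up to the finite limit $\sigma_{-r}(p^\infty)=1/(1-p^{-r})$. So the left-hand side is an ascending family of closed intervals of the same ``shape,'' and the entire theorem reduces to determining precisely when consecutive ones have a positive gap between them and when they overlap (or touch). The index $t$ is exactly the transition between these two regimes.

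First I would establish a short monotonicity lemma for the ratio $\rho(i) := \sigma_{-r}(p^{i+1})/\sigma_{-r}(p^i) = 1 + p^{-(i+1)r}/\sigma_{-r}(p^i)$: the numerator decreases and the denominator increases with $i$, so $\rho$ is strictly decreasing, and $\rho(i)\to 1$ as $i\to\infty$. Since $b/a > 1$, this shows $t$ is a well-defined finite non-negative integer. The intervals $\sigma_{-r}(p^i)\cdot I$ and $\sigma_{-r}(p^{i+1})\cdot I$ are disjoint if and only if $b\sigma_{-r}(p^i) < a\sigma_{-r}(p^{i+1})$, i.e.\ if and only if $\rho(i) > b/a$; by the minimality of $t$ this happens exactly when $i < t$. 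Consequently, the closed intervals $\sigma_{-r}(p^i)\cdot I$ for $0\leq i < t$ have strictly increasing endpoints with positive gaps between consecutive ones, hence are pairwise disjoint, and each lies strictly to the left of $a\sigma_{-r}(p^t)$.

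For the remaining indices, monotonicity of $\rho$ gives $\rho(i)\leq b/a$ for every $i\geq t$, so consecutive closed intervals $\sigma_{-r}(p^i)\cdot I$ and $\sigma_{-r}(p^{i+1})\cdot I$ overlap or share an endpoint. Thus $\bigcup_{t\leq i < \infty}\sigma_{-r}(p^i)\cdot I$ is an ascending union of overlapping closed intervals with minimum $a\sigma_{-r}(p^t)$ and supremum $\lim_{i\to\infty} b\sigma_{-r}(p^i) = b\sigma_{-r}(p^\infty)$; this union therefore equals the half-open interval $[a\sigma_{-r}(p^t),\, b\sigma_{-r}(p^\infty))$. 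The $i=\infty$ factor $[a\sigma_{-r}(p^\infty),\, b\sigma_{-r}(p^\infty)]$ supplies the missing right endpoint and already sits inside the chained union, because $\sigma_{-r}(p^\infty)/\sigma_{-r}(p^i)\to 1 < b/a$ forces $a\sigma_{-r}(p^\infty) < b\sigma_{-r}(p^i)$ for large $i$. Adjoining it therefore closes the tail up to $[a\sigma_{-r}(p^t),\, b\sigma_{-r}(p^\infty)]$, and combined with the previous paragraph this yields both the claimed set equality and the fact that the right-hand side is a decomposition into pairwise disjoint closed intervals.

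The only real obstacle here is the bookkeeping around the $i=\infty$ term: one must verify that it fuses with the chained tail rather than creating a new component, and that $t$ exists and is characterized exactly by the minimality condition in the statement. Both follow directly from the monotonicity and limit of $\rho$, so no idea beyond the ratio lemma is required.
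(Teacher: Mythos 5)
Your argument is correct and complete: the strict decrease of $\rho(i)=\sigma_{-r}(p^{i+1})/\sigma_{-r}(p^i)$ to $1$ is exactly the right lever, the gap/overlap dichotomy at the threshold $t$ is handled precisely, and you correctly account for the half-open tail and the role of the $i=\infty$ term in supplying the right endpoint. The paper itself gives no proof of this statement --- it is imported from Sanna's Lemma 2.5, whose argument runs along the same lines --- so the only further remark is that you have (rightly) read the hypothesis ``$a>b>1$'' as the typo for $b>a>1$ that it must be.
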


With these three theorems, Sanna demonstrated a backwards induction algorithm to calculate $\overline{\sigma_{-r}(\N_{L_r-1})}, \overline{\sigma_{-r}(\N_{L_r-2})}, \ldots, \overline{\sigma_{-r}(\N_0)} = \overline{\sigma_{-r}(\N)}.$ The algorithm goes as follows: 
\begin{enumerate}
\item We know $\overline{\sigma_{-r}(\N_{L_r})} = \left[1,  \prod\limits_{t = L_r+1}^\infty \dfrac{1}{1 - p_t^{-r}}\right].$\\
\item Suppose we have calculated $\overline{\sigma_{-r}(\N_{K})} = \bigcup_{j \in J} I_j $ for $K \in \N$ and some index set $J$. For each $I_j = \left[a_j, b_j\right]$, let $t_j$ be the smallest $t \in  \Z_{\geq 0} $ such that $\dfrac{\sigma_{-r}(p_K^{t+1})}{\sigma_{-r}(p_K^{t})} \leq \dfrac{b_j}{a_j}$. 
\item Then $$\overline{\sigma_{-r}(\N_{K-1})} = \bigcup_{j \in J}\left( \bigcup_{0\leq  i < t_j} \sigma_{-r}(p_K^i) \cdot I_j \cup \left[a_j\sigma_{-r}(p_K^{t_j}), b_j\sigma_{-r}(p_K^\infty)\right]\right).$$ These intervals might be not pairwise disjoint, but there are still finitely many of them. 
\end{enumerate}

Using Sanna's result, we prove the following.
\begin{theorem}\label{crformula}
The number of connected components of $\overline{\sigma_{-r}(\N)}$is at most $\prod\limits_{i = 1}^{L_r}\left\lceil\dfrac{\log p_{L_{r}+1}}{\log{p_{i}}}\right\rceil.$
\end{theorem}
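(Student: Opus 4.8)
The plan is to turn Sanna's backward-induction algorithm into a quantitative bound. Let $n_K$ be the number of closed intervals produced by the algorithm in its decomposition of $\overline{\sigma_{-r}(\N_K)}$; by Theorem~\ref{bananafish} we have $n_{L_r}=1$, and since $\overline{\sigma_{-r}(\N)}=\overline{\sigma_{-r}(\N_0)}$ is exhibited as a union of $n_0$ connected intervals, $C_r\le n_0$. The step from level $K$ to level $K-1$ uses the prime $p_K$, and by Theorem~\ref{structurethm} it replaces each interval $I_j=[a_j,b_j]$ by at most $t_j+1$ intervals, where $t_j$ is the least $t\in\Z_{\ge 0}$ with $\sigma_{-r}(p_K^{t+1})/\sigma_{-r}(p_K^{t})\le b_j/a_j$. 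Hence $n_{K-1}\le n_K\cdot\max_j(t_j+1)$, and it is enough to prove $t_j+1\le\lceil \log p_{L_r+1}/\log p_K\rceil$ for every interval $I_j$ occurring at level $K$ and every $1\le K\le L_r$; taking the product over $K=1,\dots,L_r$ then gives $C_r\le n_0\le\prod_{i=1}^{L_r}\lceil \log p_{L_r+1}/\log p_i\rceil$. (If $N_r=0$ there is nothing to prove, the product being empty.)

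The heart of the argument is a uniform lower bound on aspect ratios: every interval $[a,b]$ produced by the algorithm, at any level $K\in\{0,1,\dots,L_r\}$, satisfies $b/a\ge(1-p_{L_r+1}^{-r})^{-1}$. I would prove this by backward induction on $K$. The base case $K=L_r$ is immediate, since the unique interval $[1,\prod_{t>L_r}(1-p_t^{-r})^{-1}]$ has ratio at least the single factor $(1-p_{L_r+1}^{-r})^{-1}$. For the inductive step, observe that in Theorem~\ref{structurethm} the intervals $\sigma_{-r}(p_K^{i})\cdot I$ are positive scalar multiples of $I$, hence have the same aspect ratio, while the tail interval $[a\sigma_{-r}(p_K^{t}),\,b\sigma_{-r}(p_K^{\infty})]$ has aspect ratio $(b/a)\cdot\sigma_{-r}(p_K^{\infty})/\sigma_{-r}(p_K^{t})\ge b/a$ because $\sigma_{-r}(p_K^{\infty})\ge\sigma_{-r}(p_K^{t})$. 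So the bound propagates to level $K-1$.

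With this in hand, fix $K$ and $I_j=[a_j,b_j]$ and set $x:=p_K^{-r}\in(0,1)$. Since $\sigma_{-r}(p_K^{s})=(1-x^{s+1})/(1-x)$, the ratio in question equals $\sigma_{-r}(p_K^{t+1})/\sigma_{-r}(p_K^{t})=(1-x^{t+2})/(1-x^{t+1})$, which decreases strictly in $t$ toward $1$; so it suffices to check it is $\le(1-p_{L_r+1}^{-r})^{-1}\le b_j/a_j$ whenever $p_K^{t+1}\ge p_{L_r+1}$. Clearing denominators, the inequality $(1-x^{t+2})(1-p_{L_r+1}^{-r})\le 1-x^{t+1}$ rearranges to $x^{t+1}(1-x)+x^{t+2}p_{L_r+1}^{-r}\le p_{L_r+1}^{-r}$, which follows from $x^{t+1}=p_K^{-(t+1)r}\le p_{L_r+1}^{-r}$ together with $x^{t+2}\le x$. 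Hence the least admissible $t$ is at most $\lceil\log p_{L_r+1}/\log p_K\rceil-1$, i.e.\ $t_j+1\le\lceil\log p_{L_r+1}/\log p_K\rceil$, which is the missing inequality.

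The main obstacle I anticipate is isolating the correct invariant --- the aspect-ratio lower bound $b/a\ge(1-p_{L_r+1}^{-r})^{-1}$ --- and verifying it survives both operations of Theorem~\ref{structurethm}. Once that is established, the ratio estimate converting it into a bound on $t_j$ is elementary, and multiplying the per-step factors $\lceil\log p_{L_r+1}/\log p_i\rceil$ is pure bookkeeping.
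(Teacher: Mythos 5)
Your proposal is correct and takes essentially the same route as the paper: a backward induction through Sanna's algorithm, carried by a uniform lower bound on the aspect ratio of every interval in the cover, which in turn bounds the minimal $t$ in Theorem~\ref{structurethm} by $\lceil \log p_{L_r+1}/\log p_K\rceil - 1$. The only cosmetic difference is the invariant you track: you use the single factor $(1-p_{L_r+1}^{-r})^{-1}$, while the paper uses the full tail product $\ell = \prod_{t>L_r}(1-p_t^{-r})^{-1}$ and then relaxes to $\ell \ge 1+p_{L_r+1}^{-r}$; both lead to the same per-step factor.
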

\begin{proof}
Let $$\ell = \max \overline{\sigma_{-r}(\N_{L_r}}).$$  
We proceed by the same backwards induction process to prove the following. \\
For $0\leq d \leq L_r,$ $\overline{\sigma_{-r}(\N_{L_r - d})}$ consists of at most $\prod\limits_{i = 1}^{d}\left\lceil\dfrac{\log p_{L_r+1}}{\log{p_{L_r + 1 - i}}}\right\rceil$ disjoint intervals, and each interval $\left[a, b\right]$ satisfies $a/b \geq \ell$. \\
For $d = 0$, by Sanna's result 
$$\overline{\sigma_{-r}(\N_{L_r - d})} = \left[1,    \prod\limits_{t = L_r+1}^\infty \dfrac{1}{1 - p_t^{-r}}\right].$$  Hence there is exactly one interval, and the ratio of its endpoints is exactly $\ell$.\\
Assume $d \geq 1$. Suppose $\overline{\sigma_{-r}(\N_{L_r - (d-1)})} $ is a union of $k \leq  \prod\limits_{i = 1}^{d-1}\left\lceil\dfrac{\log p_{L_r+1}}{\log{p_{L_r +1 - i}}}\right\rceil$ intervals $I_1, \ldots, I_k$ with endpoint ratios at least $\ell$. For simplicity let $p= p_{L_r - d +1}$. 
Recall that by Theorem \ref{inductstep}, 
$$\overline{\sigma_{-r}(\N_{L_r -d})}= \bigcup_{1 \leq j \leq k}  \bigcup_{i \in \Z_{\geq 0} \cup \{\infty\}} \sigma_{-r}(p^i) I_j.$$
The ratio of the endpoints of the interval $ \sigma_{-r}(p^i) I_j$ is the same as that of $I_j$, which is at least $\ell$. Also, note that the union of two intersecting intervals with endpoint ratio at least $ \ell$ is an interval with endpoint ratio at least $ \ell$ as well. Hence, if we take the union of all these intervals, the resulting set will be a union of  disjoint intervals which will also have endpoint ratios at least $ \ell.$ 

Now we bound the number of intervals. Let $I \in \{I_1, \ldots, I_k\}$. We want to bound the minimal $t$ such that 
\begin{align}\label{propz}
\dfrac{\sigma_{-r}(p^{t+1})}{\sigma_{-r}(p^{t})} \leq \ell.
\end{align}
Note that
$$\dfrac{\sigma_{-r}(p^{t+1})}{\sigma_{-r}(p^{t})} = \dfrac{1 + p^{-r} + \cdots + p^{-(t + 1)r}}{1 + p^{-r} + \cdots + p^{-tr}} = 1 + \dfrac{1}{p^r + p^{2r} + \cdots + p^{(t+1)r}} \leq 1 + \dfrac{1}{p^{(t+1)r}}.$$
Also, $\ell  \geq 1 + p^{-r}_{L_r+1}$, so for $t \geq \dfrac{\log p_{L_r+1}}{\log{p}}-1$, we have
\begin{align*}
(t + 1)\log{p} \geq  \log(p_{L_r + 1}) &\implies p^{r(t+1)} \geq p^{r}_{L_r + 1} \\
&\implies 1 + \dfrac{1}{p^{(t+1)r}} \leq 1 + \dfrac{1}{p^r_{L_r + 1}} \leq \ell.
\end{align*}
Hence the smallest $t$ satisfying (\ref{propz}) is at most  $\left\lceil\dfrac{\log p_{L_r+1}}{\log{p}}-1\right\rceil$. This implies that 
$ \bigcup_{i \in \Z_{\geq 0} \cup \{\infty\}} \sigma_{-r}(p^i) I$ is a union of at most $\left\lceil\dfrac{\log p_{L_r+1}}{\log{p}}\right\rceil$ intervals, and hence (recalling that $p = p_{L_r - d +1}$) we see that $\overline{\sigma_{-r}(\N_{L_r -d})}$ is a union of at most 
$$k\left\lceil \dfrac{\log p_{L_r+1}}{\log{p_{L_r - d }}}\right\rceil \leq \prod\limits_{i = 1}^{d}\left\lceil\dfrac{\log p_{L_r+1}}{\log{p_{L_r +1- i}}}\right\rceil$$ intervals. This completes the induction step. \newline
Thus, $\overline{\sigma_{-r}(\N_0)} = \overline{\sigma_{-r}(\N)}$ consists of at most $\prod\limits_{i = 1}^{L_r}\dfrac{\log p_{L_{r}+1}}{\log{p_{L_r + 1 - i}}} = \prod\limits_{i = 1}^{L_r}\dfrac{\log p_{L_{r}+1}}{\log{p_{i}}}$ intervals. \qedhere
\end{proof}

\begin{theorem}\label{pampampam}
With $L_r$ as above, we have
$$\prod\limits_{i = 1}^{L_r}\left\lceil\dfrac{\log p_{L_r+1}}{\log{p_{i}}} \right\rceil\leq \frac{1}{2}\exp \left[ \log 2 \frac{ p_{L_r+1}}{\log{p_{L_r+1}}} +  \bigO{ \dfrac{p_{L_r+1}}{\log^2 p_{L_r+1}}} \right]  $$
\end{theorem}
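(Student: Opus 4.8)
The plan is to take logarithms and reduce the claimed bound to an estimate on the sum $\sum_{i=1}^{L_r}\log\lceil \log p_{L_r+1}/\log p_i\rceil$. Writing $P := p_{L_r+1}$ for brevity, each ceiling satisfies $\lceil \log P/\log p_i\rceil \leq \log P/\log p_i + 1 \leq 2\log P/\log p_i$ (the last step using $p_i \leq p_{L_r} < P$, so $\log P/\log p_i > 1$). Hence the product is at most $\prod_{i=1}^{L_r} (2\log P/\log p_i)$, and I would instead aim for the cleaner route: bound $\log\lceil \log P/\log p_i\rceil \le \log\log P - \log\log p_i + O(\log p_i/\log P)$ is too lossy for small $p_i$, so instead split the product into the single factor coming from $p_1 = 2$ and the rest. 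Actually the most transparent approach is: $\lceil x \rceil \le 2^{\lceil \log_2 x\rceil}$ is not quite what we want either; rather, use $\log \lceil \log P/\log p_i \rceil \le \log(\log P/\log p_i) + \log p_i/\log P$ for the terms with $p_i$ not too large, together with $\sum_i \log p_i/\log P = \theta(P)/\log P = O(P/\log P)$ by Chebyshev/PNT, where $\theta$ is the Chebyshev function.

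The heart of the matter is therefore to estimate $\Sigma := \sum_{i=1}^{L_r} \log\!\big(\log P/\log p_i\big) = L_r\log\log P - \sum_{p \le p_{L_r}} \log\log p$. Since $L_r = \pi(p_{L_r}) = \pi(P) + O(1)$, the first term is $\pi(P)\log\log P + O(\log\log P)$. For the second term I would use partial summation against the prime counting function: $\sum_{p\le P}\log\log p = \int_{2^-}^{P}\log\log t\, d\pi(t)$, and with $\pi(t) = \mathrm{Li}(t) + O(t e^{-c\sqrt{\log t}})$ (or even just $\pi(t) \sim t/\log t$ with the classical error term, which suffices at the claimed precision $O(P/\log^2 P)$), integration by parts gives $\sum_{p\le P}\log\log p = \pi(P)\log\log P - \int_2^P \frac{\pi(t)}{t\log t}\,dt + O(1)$. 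The remaining integral $\int_2^P \frac{\pi(t)}{t\log t}\,dt = \int_2^P \frac{dt}{\log^2 t}(1 + o(1)) = O(P/\log^2 P)$. Combining, $\Sigma = \pi(P)\log\log P - \big(\pi(P)\log\log P - O(P/\log^2 P)\big) + O(\log\log P) = O(P/\log^2 P)$.

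Wait — that cannot be right, because the target exponent has a genuine main term $\log 2 \cdot P/\log P$, not merely $O(P/\log^2 P)$. The resolution is that $\log\lceil x\rceil$ is \emph{not} well-approximated by $\log x$ when $x$ is close to $1$, i.e. when $\log p_i$ is close to $\log P$; for such primes $\lceil \log P/\log p_i\rceil = 1$ contributes $0$ but also for $p_i$ slightly smaller it jumps to $\log 2$. More importantly, for the \emph{smallest} primes — in particular $p_1 = 2$ — the factor $\lceil \log P/\log 2\rceil \approx \log_2 P$ contributes $\log\log P$, not a main term; so where does $\log 2 \cdot P/\log P$ come from? It must come from the accumulation over \emph{all} primes $p_i \le \sqrt P$ roughly, of the $+1$ rounding: more precisely $\log\lceil \log P/\log p_i\rceil \ge \log(\log P/\log p_i)$ always, but we need the \emph{upper} bound, and $\lceil y \rceil \le 2y$ for $y \ge 1$ gives $\log\lceil \log P/\log p_i\rceil \le \log 2 + \log(\log P/\log p_i)$, and summing the $\log 2$ over $i = 1,\dots,L_r$ produces exactly $\log 2 \cdot L_r = \log 2\cdot \pi(P)(1+o(1)) = \log 2 \cdot P/\log P + O(P/\log^2 P)$ by the prime number theorem. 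So the clean plan is: (i) $\log\prod \le \sum_{i=1}^{L_r}\big(\log 2 + \log(\log P/\log p_i)\big) = \log 2\cdot L_r + \Sigma$; (ii) by PNT, $L_r = \pi(P) + O(1) = P/\log P + O(P/\log^2 P)$; (iii) by the partial-summation computation above, $\Sigma = O(P/\log^2 P)$; (iv) assemble, and divide the product bound by $2$ after noting the $i=1$ term actually gives $\lceil \log_2 P\rceil$ which is $\le \log P$, absorbing one factor of $2$ — or more simply just carry the constant $\tfrac12$ through by being slightly more careful on one term. The main obstacle is step (iii): getting the error term in $\Sigma$ down to $O(P/\log^2 P)$ honestly, which requires feeding the prime number theorem with its classical error term into the partial-summation integral and checking that $\int_2^P \frac{\pi(t)}{t\log t}\,dt = O(P/\log^2 P)$; this is routine analytic number theory but must be done with care since a naive bound $\pi(t)\le t$ would only give $O(P/\log P)$, which is not good enough.
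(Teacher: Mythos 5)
Your approach is correct, and it ultimately runs on the same analytic machinery the paper uses — Riemann–Stieltjes partial summation against $\pi$, the prime number theorem for the main term, and $\mathrm{Li}$ asymptotics for the error — but the decomposition is genuinely different in a way worth noting. You bound $\lceil \log P/\log p_i\rceil \le 2\log P/\log p_i$ (valid since $\log P/\log p_i \ge 1$), so the main term $\log 2 \cdot L_r = \log 2 \cdot \pi(P)(1 + o(1))$ falls out immediately as $\log 2$ summed over all $L_r$ primes, and the job reduces to showing the residual $\Sigma = \sum_i\left(\log\log P - \log\log p_i\right)$ is $O(P/\log^2 P)$. The paper instead uses the slightly tighter $\lceil y \rceil \le y + 1$, writes the log of the product as $\sum_{i}\left[\log\log(p_i P) - \log\log p_i\right]$ (with an extra $i = L_r{+}1$ term contributing exactly a factor $2$, hence the clean $\tfrac12$ prefactor), and then the $\pi(P)\log 2$ main term appears as the boundary term $\pi(P)\log\log P^2 - \pi(P)\log\log P$ of the integration by parts, with the remaining integral $\int_2^P\frac{\pi(x)\log P}{x\log(Px)\log x}\,dx$ bounded by $O(P/\log^2 P)$ via $\pi(x) < 2x/\log x$ and $\mathrm{Li}(x) - x/\log x = O(x/\log^2 x)$. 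Your route makes the origin of the $\log 2 \cdot P/\log P$ term more transparent; the paper's bound is tighter at the intermediate stage and yields the exact $\tfrac12$ prefactor mechanically. Your treatment of that $\tfrac12$ is hand-wavy, but harmlessly so: since $\tfrac12 = e^{-\log 2}$, the constant can be absorbed into the $O\!\left(P/\log^2 P\right)$ error in the exponent, so nothing is lost. The one step you should nail down rather than wave at is (iii): as you yourself flag, a naive $\pi(t) \le t$ would only give $O(P/\log P)$, so you do need to invoke $\pi(t) \le 2t/\log t$ (or PNT directly) inside $\int_2^P \frac{\pi(t)}{t\log t}\,dt$ to land on $O(P/\log^2 P)$, which is exactly what the paper does.
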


\begin{proof}
First, note that 
\begin{align*}
\prod\limits_{i = 1}^{L_r}\left\lceil\dfrac{\log p_{L_r+1}}{\log{p_{i}}} \right\rceil &\leq \prod\limits_{i = 1}^{L_r}\dfrac{\log p_{L_r+1}+\log{p_{i}}}{\log{p_{i}}} \\
&= \frac{1}{2}  \prod\limits_{i = 1}^{L_r+ 1}\dfrac{\log p_{L_r+1}+\log{p_{i}}}{\log{p_{i}}}\\
&=\frac{1}{2}\exp\left[  \sum_{i = 1}^{L_r+1} \log \log (p_i p_{L_r + 1}) - \sum_{i = 1}^{L_r+1} \log \log p_i \right].
\end{align*}
For simplicity, we put $S:=p_{L_r + 1} $ and estimate the exponent using the Riemann-Stieltjes integral. We have
\begin{align*}
 \sum_{i = 1}^{L_r+1} \log \log p_i p_{L_r + 1} &= \int_{2-}^{S+} \log \log (S x) \ \dint \pi(x)\\
& = \pi(x) \log \log (S x) \  \Big|_{2-}^{S+}  -  \int_{2-}^{S+} \frac{\pi (x)}{x \log (S x)} \ \dint x\\
&=\pi(S) \log \log S^2  - \int_{2}^{S} \frac{\pi (x)}{x \log (S x)} \ \dint x
\end{align*}
and similarly 
$$  \sum_{i = 1}^{L_r+1} \log \log p_i  = \int_{2 - }^{S+} \log \log (x) \ \dint \pi(x) = \pi(S)\log\log S  - \int_{2}^{S} \frac{\pi (x)}{x \log (x)} \ \dint x. $$
\begin{align*}
&\pi(S) \log \log S^2 - \int_{2}^{S} \frac{\pi (x)}{x \log (S x)} \ \dint x -  \pi(S)\log\log S + \int_{2}^{S} \frac{\pi (x)}{x \log (x)}\ \dint x  \\
= &\pi(S) \log \frac{\log S^2}{\log S} +   \int_{2}^{S} \frac{\pi (x)(\log (Sx) - \log x)}{x \log(Sx) \log (x)}    \ \dint x\\
=& \pi(S) \log 2 +  \int_{2}^{S} \frac{\pi (x)\log S }{x \log(Sx) \log (x)}    \ \dint x.
\end{align*}
We now estimate the remaining integral. Since $\pi(x) < \dfrac{2x}{\log x}$ for all $x \geq 2$ (see \cite{piconstbound}), 
\begin{align*}
  \int_{2}^{S} \frac{\pi (x)\log S }{x \log(Sx) \log (x)}   \ \dint x &\leq   \int_{2}^{S} \frac{2\log S }{\log(Sx) \log^2 (x)}    \ \dint x\\
&\leq   \int_{2}^{S} \frac{2}{ \log^2 (x)}  \ \dint x= \left(2\Li(x) - 2\frac{x}{\log x} \right) \Big|_{2}^{S} = 2\Li(S) - \frac{2S}{\log S} + \frac{4}{\log 2},
\end{align*} 
where $\Li(x) = {\displaystyle \int_{2}^x \frac{1}{\log x}} \ \dint x $ is the logarithmic integral. We know from the asymptotic series of $\Li$ about $\infty$ that $\Li(t) - \dfrac{t}{\log t} = \bigO{\dfrac{t}{\log^2 t}}$, and hence, 
$$  \int_{2}^{S} \frac{\pi (x)\log S }{x \log(Sx) \log (x)}   \ \dint x  = \bigO{ \dfrac{S}{\log^2 S}}.$$
Lastly, as a consequence of the Prime Number Theorem, $\pi(S) = \dfrac{S}{\log S} + \bigO{\dfrac{S}{\log^2{S}}}$, and so
\begin{align*}
 \frac{1}{2} \exp\left[  \sum_{i = 1}^{L_r+1} \log \log (p_i S) - \sum_{i = 1}^{L_r+1} \log \log p_i \right] &\leq \frac{1}{2}\exp \left[ (\log 2 )\pi(S) +  \bigO{ \dfrac{S}{\log^2 S}} \right] \\
 &= \frac{1}{2}\exp \left[ (\log 2) \frac{ S}{\log{S}} +  \bigO{ \dfrac{S}{\log^2 S}} \right] 
\end{align*}
as desired. 
\end{proof}
\begin{theorem}
For large $r$, 
$$C_r \leq \frac{1}{2}\exp\left[\frac{1}{2}\dfrac{r^{20/9}}{(\log r)^{29/9}} \left( 1 + \frac{\log\log r}{\log r - \log\log r} + \frac{\bigO{1}}{\log r}\right) \right].$$
\end{theorem}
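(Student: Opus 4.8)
The plan is to combine Theorem~\ref{crformula}, Theorem~\ref{pampampam}, and Corollary~\ref{Mrt} (together with the Prime Number Theorem) in sequence. By Theorem~\ref{crformula}, $C_r \leq \prod_{i=1}^{L_r}\lceil \log p_{L_r+1}/\log p_i\rceil$, and by Theorem~\ref{pampampam} this is at most $\frac{1}{2}\exp\left[\log 2 \cdot \frac{p_{L_r+1}}{\log p_{L_r+1}} + \bigO{\frac{p_{L_r+1}}{\log^2 p_{L_r+1}}}\right]$. So the entire task reduces to bounding the quantity $\frac{p_{L_r+1}}{\log p_{L_r+1}}$ from above and feeding the result into this expression. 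Since $P_r = p_{L_r}$, the prime $p_{L_r+1}$ is the prime immediately following the largest $r$-mighty prime; by Bertrand's postulate (or any crude prime-gap bound) $p_{L_r+1} \leq 2P_r \leq 2\left(\frac{r}{w\log r}\right)^{20/9}$ for large $r$ by Corollary~\ref{Mrt}, for any fixed $w < 11/9$.

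Next I would estimate $\frac{p_{L_r+1}}{\log p_{L_r+1}}$. Writing $x := p_{L_r+1}$, the function $t \mapsto t/\log t$ is increasing for $t > e$, so $\frac{x}{\log x} \leq \frac{2P_r}{\log(2P_r)}$, and since $\log 2$ is negligible, $\frac{x}{\log x} \leq (1+o(1))\frac{P_r}{\log P_r} \leq (1+o(1))\frac{(r/(w\log r))^{20/9}}{\log\left((r/(w\log r))^{20/9}\right)}$. The denominator is $\frac{20}{9}\left(\log r - \log\log r - \log w\right) = \frac{20}{9}\log r\left(1 - \frac{\log\log r}{\log r} + \bigO{1/\log r}\right)$. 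So $\frac{x}{\log x} \leq \frac{9}{20\log 2}\cdot\frac{r^{20/9}}{w^{20/9}(\log r)^{29/9}}\left(1 - \frac{\log\log r}{\log r}\right)^{-1}\left(1 + \bigO{1/\log r}\right)$, and using $(1-u)^{-1} = 1 + \frac{u}{1-u}$ with $u = \log\log r/\log r$ this becomes $\frac{9}{20\log 2}\cdot\frac{r^{20/9}}{w^{20/9}(\log r)^{29/9}}\left(1 + \frac{\log\log r}{\log r - \log\log r} + \bigO{1/\log r}\right)$. Multiplying by $\log 2$ (from the exponent in Theorem~\ref{pampampam}) kills that constant, leaving $\frac{9}{20 w^{20/9}}\cdot\frac{r^{20/9}}{(\log r)^{29/9}}\left(1 + \cdots\right)$ in the exponent. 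The $\bigO{x/\log^2 x}$ error term is lower order: it is $\bigO{\frac{r^{20/9}}{(\log r)^{38/9}}}$, which can be absorbed into the $\frac{\bigO{1}}{\log r}$ correction inside the bracket (note $(\log r)^{-38/9} = (\log r)^{-29/9}\cdot(\log r)^{-1}$).

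The final step is to choose $w$. The target coefficient in the statement is $\frac{1}{2}$, i.e.\ we need $\frac{9}{20 w^{20/9}} \leq \frac{1}{2}$, which holds provided $w^{20/9} \geq \frac{9}{10}$, i.e.\ $w \geq (9/10)^{9/20}$, and since $(9/10)^{9/20} < 1 < 11/9$ this is a legitimate choice of $w < 11/9$; in fact taking $w$ slightly less than $11/9$ makes $\frac{9}{20w^{20/9}}$ strictly less than $\frac12$, and the slack is absorbed into the error term — or one simply takes $w$ with $\frac{9}{20w^{20/9}} = \frac12$ exactly. Assembling the pieces gives $C_r \leq \frac{1}{2}\exp\left[\frac{1}{2}\cdot\frac{r^{20/9}}{(\log r)^{29/9}}\left(1 + \frac{\log\log r}{\log r - \log\log r} + \frac{\bigO{1}}{\log r}\right)\right]$.

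I expect the main obstacle to be purely bookkeeping: tracking the $(1+o(1))$ and $\bigO{\cdot}$ factors through the nested logarithm $\log\left((r/(w\log r))^{20/9}\right)$ and verifying that every error term genuinely fits inside the single $\frac{\bigO{1}}{\log r}$ slot in the bracket — in particular that the $\bigO{x/\log^2 x}$ term from Theorem~\ref{pampampam}, the prime-gap slack from $p_{L_r+1} \leq 2P_r$, and the PNT error in $\pi(S)$ are all of relative size $\bigO{1/\log r}$ compared to the main term $\frac{r^{20/9}}{(\log r)^{29/9}}$. There is no deep difficulty; the estimate is a chain of monotonicity arguments and Taylor expansions of logarithms, and the choice of $w < 11/9$ provides exactly the room needed to land on the constant $\frac12$.
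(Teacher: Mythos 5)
Your proof follows the paper's structure: combine Theorem~\ref{crformula} with Theorem~\ref{pampampam}, then feed in the bound on $P_r$ from Corollary~\ref{Mrt}, expanding $\log\bigl((r/(w\log r))^{20/9}\bigr)$ and absorbing lower-order terms. That is the right skeleton. However, your treatment of the step from $P_r$ to $p_{L_r+1}$ has an error that your own self-assessment misdiagnoses.

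You pass from $x := p_{L_r+1} \leq 2P_r$ (Bertrand) to ``$\frac{x}{\log x}\leq (1+o(1))\frac{P_r}{\log P_r}$, since $\log 2$ is negligible.'' This is wrong: $\frac{2P_r}{\log(2P_r)} = \frac{2P_r}{\log 2 + \log P_r} = (2+o(1))\frac{P_r}{\log P_r}$. The additive $\log 2$ in the denominator is indeed negligible, but the multiplicative $2$ in the numerator is not---it is a constant factor, not a $1+\bigO{1/\log r}$ correction, contrary to your closing remark that ``the prime-gap slack from $p_{L_r+1}\leq 2P_r$ [is] of relative size $\bigO{1/\log r}$.'' You also introduce a spurious $\tfrac{1}{\log 2}$ in the displayed bound on $\frac{x}{\log x}$ (the $9/(20\log 2)$ should just be $9/20$), which then conveniently cancels the $\log 2$ from Theorem~\ref{pampampam} and yields your constraint $w^{20/9}\geq 9/10$. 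Tracked correctly, the Bertrand route gives exponent coefficient $\tfrac{9\log 2}{10\,w^{20/9}}$ rather than your $\tfrac{9}{20\,w^{20/9}}$, so the constraint on $w$ is actually $w^{20/9}\geq \tfrac{9\log 2}{5}\approx 1.248$, i.e.\ $w\geq 1.105$, which is \emph{still} achievable since $11/9\approx 1.222$. So the approach is salvageable, but as written two of your numerical steps are incorrect and the final check is done against the wrong constant.

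The paper sidesteps the factor of $2$ entirely: since the bound $P_r\leq (r/(w\log r))^{20/9}$ holds for \emph{every} $w<11/9$, and since $p_{L_r+1}/p_{L_r}\to 1$ (a consequence of the PNT, which the paper already relies on in Theorem~\ref{pampampam}), one can absorb the ratio $p_{L_r+1}/P_r=1+o(1)$ into a slight decrease of $w$ and conclude $p_{L_r+1}\leq (r/(w\log r))^{20/9}$ for any $w<11/9$. That yields the coefficient $\tfrac{9\log 2}{20\,w^{20/9}}\approx 0.2$, comfortably below $\tfrac12$, with no factor-of-$2$ bookkeeping. If you prefer to keep Bertrand, do the arithmetic with the $2$ in place and verify the (tighter but still satisfiable) constraint on $w$.
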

\begin{proof}
As we showed in the first section, for any $0< w < 11/9$, 
$$p_{L_r}  = P_r \leq \left(\dfrac{r}{w \log r}\right)^{20/9}$$ for large enough $r$. Since ratios of consecutive primes go to $1$, it is also true that for any $0< w < 11/9$, 
$$p_{L_r+1} \leq \left(\dfrac{r}{w \log r}\right)^{20/9}$$ for large enough $r$. We will apply this to Theorem \ref{crformula} using the estimate from Theorem \ref{pampampam}. Let $S =  \left(\dfrac{r}{w \log r}\right)^{20/9}$ for a fixed $0 < w < 11/9$. Then
\begin{align*}
\log (2) S \left(\frac{1}{\log S} + \frac{\bigO{1}}{\log^2 S} \right)&=   \frac{\log 2}{w^{20/9}}\dfrac{r^{20/9}}{(\log r)^{20/9}} \left( \frac{1}{20/9(\log r - \log\log r)} + \frac{\bigO{1}}{(\log r)^2}\right) \\
&< \left( \frac{9\log 2}{20w^{20/9}} \right) \dfrac{r^{20/9}}{(\log r)^{20/9}} \left( \frac{1}{\log r - \log\log r} + \frac{\bigO{1}}{(\log r)^2}\right).
\end{align*}
Since $ \dfrac{9\log 2}{20(11/9)^{20/9}} <1/2$, we can deduce that for large $r$, 
$$C_r \leq \frac{1}{2}\exp\left[\frac{1}{2}\dfrac{r^{20/9}}{(\log r)^{29/9}} \left( 1 + \frac{\log\log r}{\log r - \log\log r} + \frac{\bigO{1}}{\log r}\right) \right]$$
as desired. 
\end{proof}

\section{$C_r$ is Never Equal to $4$ }

In this section we show that $C_r$ does not take all integer values. We do this by proving the following theorem:
\begin{theorem}\label{s4main}
Let $r \in \R, r \geq 1$, and let $C_r$ be the number of connected components of $\sigbar$. Then $C_r \neq 4$.
\end{theorem}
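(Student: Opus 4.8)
The plan is to pin down $C_r$ in terms of $N_r$, the number of $r$-mighty primes, and then combine this with the bound $C_r\ge N_r+1$ of Theorem~\ref{mingaps}. Concretely, I will argue that $C_r=N_r+1$ when $N_r\le 2$, while $C_r\ge N_r+2\ (\ge 5)$ when $N_r=3$. Since Theorem~\ref{mingaps} already gives $C_r\ge 5$ when $N_r\ge 4$, and Defant's theorem gives $C_r=1$ when $N_r=0$, this forces $C_r\in\{1,2,3\}\cup\{5,6,7,\dots\}$, so in particular $C_r\ne 4$.

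The first step is to identify, for small $N_r$, exactly which primes are $r$-mighty. By Theorem~\ref{mainlower} every prime below $r$ is $r$-mighty, so $N_r\le 3$ forces $r$ to be bounded by an explicit constant; within that range one checks, by estimating the defining inequality $\tfrac{p^{r}+1}{p^{r}-1}\prod_{q<p}(1-q^{-r})^{-1}>\zeta(r)$, that $3,2,5$ are the only primes that can be $r$-mighty, that their thresholds satisfy $\eta=r_3<r_2<r_5$, and that each stays $r$-mighty for all larger $r$. Hence $N_r=1$ forces the mighty set to be $\{3\}$, $N_r=2$ forces $\{2,3\}$, and $N_r=3$ forces $\{2,3,5\}$. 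In the cases $N_r\in\{1,2\}$ I run Sanna's backward induction (Theorems~\ref{bananafish}, \ref{inductstep}, \ref{structurethm}) explicitly: starting from $\overline{\sigma_{-r}(\N_{L_r})}=[1,\prod_{p>P_r}(1-p^{-r})^{-1}]$ and multiplying in turn by powers of the smaller primes, every invocation of Theorem~\ref{structurethm} has cutoff index $t=1$ (a short list of elementary one-variable inequalities of the form $\tfrac{\sigma_{-r}(p^{2})}{\sigma_{-r}(p)}\le b/a$, which one verifies on the relevant $r$-interval), and the only gaps surviving in the final union are the $N_r$ gaps $(\prod_{t>m}(1-p_t^{-r})^{-1},\,1+p_m^{-r})$ attached to the mighty primes $p_m$ as in Theorem~\ref{mingaps}. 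This yields $C_r=2$ when $N_r=1$ and $C_r=3$ when $N_r=2$.

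The crucial case is $N_r=3$, where the mighty set is $\{2,3,5\}$ and Theorem~\ref{mingaps} only gives $C_r\ge 4$. Write $c:=\prod_{p\ge 7}(1-p^{-r})^{-1}=\max\overline{\sigma_{-r}(\N_3)}$, and set
$$ G:=\bigl(\sigma_{-r}(2)\,c,\ \sigma_{-r}(10)\bigr)=\bigl((1+2^{-r})c,\ (1+2^{-r})(1+5^{-r})\bigr). $$
This is nonempty because $c<1+5^{-r}$, which is precisely the statement that $5$ is $r$-mighty, and both endpoints lie in $\sigbar$ since $\sigma_{-r}(10)=\sigma_{-r}(2)\sigma_{-r}(5)$ and $\sigma_{-r}(2)\,c=\lim_{s\to\infty}\sigma_{-r}\!\bigl(2\,p_4^{s}p_5^{s}\cdots p_{3+s}^{s}\bigr)$. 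I then show $G$ is disjoint from $\sigbar$: iterating Theorem~\ref{inductstep} shows every element of $\sigbar$ lies in an interval $\sigma_{-r}(2^{a})\sigma_{-r}(3^{b})\sigma_{-r}(5^{c'})\cdot[1,c]$ with $a,b,c'\in\Z_{\ge 0}\cup\{\infty\}$, and such an interval meets $G$ only if $1<\sigma_{-r}(2^{a})\sigma_{-r}(3^{b})\sigma_{-r}(5^{c'})/(1+2^{-r})<1+5^{-r}$. A short case check on $a$ rules this out: if $b\ge 1$ then $\sigma_{-r}(3^{b})\ge 1+3^{-r}>1+5^{-r}$; if $b=0$ and $c'\ge 1$ then $\sigma_{-r}(5^{c'})\ge 1+5^{-r}$; if $a\ge 2$ then $\sigma_{-r}(2^{a})/(1+2^{-r})\ge 1+\tfrac{2^{-2r}}{1+2^{-r}}>1+5^{-r}$ on the phase; and if $a=0$ then $\sigma_{-r}(3^{b})\sigma_{-r}(5^{c'})/(1+2^{-r})\le\bigl((1-3^{-r})(1-5^{-r})(1+2^{-r})\bigr)^{-1}<1$ on the phase. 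Finally $G$ is distinct from each of the three gaps attached to $2,3,5$, all of which lie below the level $1+2^{-r}$ whereas $G$ lies above it. Hence $\sigbar$ has at least four gaps and $C_r\ge 5$.

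The difficulty is concentrated in this last case: one must certify, for \emph{every} $r$ with $N_r=3$ (an explicitly delimited closed $r$-interval, once the threshold ordering and persistence of mightiness are in hand), that each of the one-variable inequalities above — and likewise all the ``cutoff $=1$'' inequalities used for $N_r\le 2$ — actually holds, not merely at sample points; these are elementary but numerically delicate near the phase boundaries. A secondary point requiring care is the preliminary bookkeeping that $3,2,5$ are the first primes to become $r$-mighty and stay $r$-mighty, which is what licenses the reduction to the cases $N_r\in\{0,1,2,3\}$ and $N_r\ge 4$ above.
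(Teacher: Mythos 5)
Your proposal follows essentially the same strategy as the paper: use Theorem~\ref{mingaps} to dispose of $N_r\geq 4$, run Sanna's algorithm explicitly for $N_r\leq 2$ (equivalently $L_r=2$) to get $C_r\leq 3$, and produce the additional gap $\bigl(u_3\sigma_{-r}(2),\,\sigma_{-r}(10)\bigr)$ in the crucial case $N_r=3$ to force $C_r\geq 5$. Your case analysis on exponents $(a,b,c')$ in the interval structure is just a repackaging of the paper's classification of $n$ by divisibility by $2,3,5$, and the required numerical verifications you flag correspond to the paper's computer checks on the window $2.2\leq r\leq 2.5$.
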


\begin{defn}
For each prime $p$, let $r_p:=\inf\{s\in\R_{>1} : p\text{ is }s-\text{mighty}\}$. 
\end{defn}\label{rpexists}

\begin{lemma}\label{rpexists}
For a prime $p$, $r_p$ is finite, and $p$ is $s$-mighty if and only if $s>r_p$. 
\end{lemma}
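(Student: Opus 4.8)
The plan is to recast ``$p$ is $s$-mighty'' as the positivity of a smooth function of $s$, deduce $r_p<\infty$ by a soft comparison, and then prove the equivalence by showing that this function can only cross zero from below, so that its positivity set is an up-ray. Unwinding the definition exactly as in the proof of Lemma~\ref{rephrased}, for $p=p_m$ one has that $p$ is $s$-mighty if and only if $F(s)>0$, where
$$F(s):=\log\bigl(1+p^{-s}\bigr)+\sum_{t=m+1}^\infty\log\bigl(1-p_t^{-s}\bigr).$$
Both this series and its termwise derivative $\sum_{t>m}\frac{\log p_t}{p_t^{\,s}-1}$ converge uniformly on $[1+\delta,\infty)$ for every $\delta>0$ (Weierstrass $M$-test), so $F$ is $C^1$ on $(1,\infty)$; in particular $M_p:=\{s>1:p\text{ is }s\text{-mighty}\}=\{s>1:F(s)>0\}$ is open.

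For finiteness of $r_p$ I will invoke the sufficient condition of Lemma~\ref{rephrased}: $p$ is $s$-mighty whenever $S_{1,m}(s)<\frac{1}{p^{s}+1}$, i.e.\ whenever $p^{s}S_{1,m}(s)+S_{1,m}(s)<1$. Now $p^{s}S_{1,m}(s)=\sum_{t>m}(p/p_t)^{s}$, and since each ratio $p/p_t$ and each reciprocal $1/p_t$ is $<1$, both $p^{s}S_{1,m}(s)$ and $S_{1,m}(s)$ decay to $0$ as $s\to\infty$. Hence $M_p$ contains a neighbourhood of $+\infty$, so it is nonempty and $r_p=\inf M_p<\infty$.

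The heart of the argument is the claim that $F(s_0)=0$ forces $F'(s_0)>0$. Put $x=p^{-s_0}\in(0,1)$ and $y_t=p_t^{-s_0}\in(0,x)$; then $F(s_0)=0$ reads $\log(1+x)=\sum_{t>m}\bigl(-\log(1-y_t)\bigr)$, while $F'(s_0)=-\frac{x\log p}{1+x}+\sum_{t>m}\frac{y_t\log p_t}{1-y_t}$. Using in turn $\log p_t>\log p$ (valid since $p_t>p_m=p$ for $t>m$), the elementary bound $\frac{y}{1-y}\ge-\log(1-y)$ for $y\in[0,1)$, the hypothesis $F(s_0)=0$, and finally $\log(1+x)>\frac{x}{1+x}$ for $x>0$, one gets
$$\sum_{t>m}\frac{y_t\log p_t}{1-y_t}>(\log p)\sum_{t>m}\frac{y_t}{1-y_t}\ \ge\ (\log p)\sum_{t>m}\bigl(-\log(1-y_t)\bigr)=(\log p)\log(1+x)>\frac{x\log p}{1+x},$$
whence $F'(s_0)>0$.

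Granting the claim, the equivalence follows from a standard intermediate-value argument: $F$ is continuous on $(1,\infty)$, positive for all large $s$ (by the previous paragraph), and every zero of $F$ is a strict upcrossing, so $F$ cannot return to $0$ once it has become positive; therefore $\{F>0\}$ is a single interval, and since $M_p$ is open with $r_p=\inf M_p$ this interval is exactly $(r_p,\infty)$. That is precisely the assertion that $p$ is $s$-mighty if and only if $s>r_p$. The one genuinely delicate point is the claim $F'(s_0)>0$ at zeros of $F$: it is what prevents the mighty set from splitting into several intervals, and it works only because the three elementary inequalities above combine exactly against the constraint $F(s_0)=0$; everything else is bookkeeping.
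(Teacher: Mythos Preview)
Your proof is correct, but it takes a genuinely different route from the paper. The paper argues directly on the defining inequality \eqref{carrots}, namely $p_m^{-r}>\sum_{k\in\N_m\setminus\{1\}}k^{-r}$: passing from $r$ to $r+\delta$ multiplies the left side by $p_m^{-\delta}$ and each summand on the right by $k^{-\delta}\le p_{m+1}^{-\delta}$, so the ratio (left)/(right) increases by at least $(p_{m+1}/p_m)^\delta>1$. Thus this ratio is strictly increasing in $r$ and tends to $\infty$, which gives both finiteness of $r_p$ and the up-ray property in one stroke, with no calculus. Your argument instead packages the inequality as $F(s)>0$, establishes $C^1$ regularity, and proves the pleasant local fact that every zero of $F$ is a strict upcrossing via the chain $\frac{y}{1-y}\ge -\log(1-y)$ and $\log(1+x)>\frac{x}{1+x}$. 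This is sound and self-contained, and the upcrossing lemma is a nice observation in its own right; the trade-off is that it is longer and invokes more machinery (uniform convergence, termwise differentiation, an IVT argument) than the paper's two-line scaling comparison, which already yields global monotonicity of the ratio rather than just sign information at zeros.
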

\begin{proof}
Recall that a prime $p_m$ is $r$-mighty if 
$$1 + \dfrac{1}{p_m^r} > \prod\limits_{t = m+1}^\infty \dfrac{1}{1 - p_t^{-r}} = \sum\limits_{k \in \N_m} \dfrac{1}{k^r},$$
or equivalently, 
\begin{align}\label{carrots}
\dfrac{1}{p_m^r} > \sum\limits_{k \in \N_m\setminus \{1\}} \dfrac{1}{k^r},
\end{align}
where $\N_m$ is again the set of all positive integers $k$ such that all prime factors of $k$ are greater than $p_m$.
Choose $r>1$, and let $s=r+\delta$ for some $\delta>0$. If we replace $r$ with $s$ in (\ref{carrots}), then the left-hand side multiplies by 
$$\frac{1/p_m^s}{1/p_m^r}=\frac{1}{p_m^\delta}$$ while each term in the sum on the right-hand side of (\ref{carrots}) multiplies by at most 
$$\frac{1/p_{m+1}^s}{1/p_{m+1}^r}=\frac{1}{p_{m+1}^\delta}.$$
Thus the expression on the left of (\ref{carrots}) grows $1 + \Omega(1)$ times faster than the expression on the right, which immediately implies the statement of the lemma. (Here $1 + \Omega(1)$ means that there exists $\delta > 0$ such that the statement is true for $1 + \delta$.))
\end{proof}                                     
\begin{corollary}
The number $N_r$ of $r$-mighty primes is a non-decreasing function of $r$.
\end{corollary}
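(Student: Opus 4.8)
The plan is to read this off directly from Lemma~\ref{rpexists}, which already contains all the work. By that lemma, for each prime $p$ the threshold $r_p$ is a finite real number and $p$ is $s$-mighty precisely when $s > r_p$. Hence for every $r > 1$ the set of $r$-mighty primes is exactly $\mathcal{M}_r := \{\, p \text{ prime} : r_p < r \,\}$, and $N_r = \lvert \mathcal{M}_r \rvert$.

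The second and final step is the trivial monotonicity of this family of sets: if $r \leq r'$, then $r_p < r$ forces $r_p < r'$, so $\mathcal{M}_r \subseteq \mathcal{M}_{r'}$, and therefore $N_r = \lvert \mathcal{M}_r \rvert \leq \lvert \mathcal{M}_{r'} \rvert = N_{r'}$. That is precisely the assertion that $N_r$ is non-decreasing.

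The only point worth a sentence is that $N_r$ is a genuine (finite) function of $r$, so that the comparison $N_r \leq N_{r'}$ is meaningful; this is guaranteed by Sanna's finiteness result (and, for large $r$, quantitatively by Corollary~\ref{Mrt}), though in fact the containment $\mathcal{M}_r \subseteq \mathcal{M}_{r'}$ yields the monotonicity of the counts even without invoking finiteness. So I do not expect any real obstacle here: the entire substance has been isolated in Lemma~\ref{rpexists}, and the corollary is the one-line observation that an upward-closed family of ``mightiness thresholds'' $r_p$ produces a non-decreasing count.
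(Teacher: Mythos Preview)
Your argument is correct and is exactly the intended one: the paper states this as an immediate corollary of Lemma~\ref{rpexists} with no further proof, and you have simply spelled out the one-line deduction that $\{p:r_p<r\}$ is increasing in $r$.
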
 
We now show that the number of connected components of $\overline{\sigma_{-r}(\N)}$ is never equal to $4$. 
\begin{theorem}[\cite{primeinint}]\label{primeint}
For $n > 25$, there is always a prime between $n$ and $6n/5$.
\end{theorem}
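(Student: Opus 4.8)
The plan is to prove this in the spirit of Nagura's original argument, by combining an explicit Chebyshev-type estimate (a quantitative version of the prime number theorem) with a finite computation. The first step is a purely combinatorial reduction to a statement about \emph{consecutive} primes. Suppose the conclusion failed for some real $n > 25$. Let $p$ be the largest prime $\le n$ and $q$ the smallest prime $> n$; then $p < q$ are consecutive primes, and since $(n, 6n/5)$ contains no prime while $q > n$, we must have $q \ge 6n/5 > 30$, hence $q \ge 31$; moreover $p \le n$ together with $5q \ge 6n$ yields $5q \ge 6p$. Conversely, given consecutive primes $p < q$ with $q \ge 31$ and $5q \ge 6p$, the number $n := 5q/6$ satisfies $n \ge p$, $n > 25$, and $(n, 6n/5) = (n, q)$ contains no prime. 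Thus the theorem is \emph{equivalent} to the assertion that every pair of consecutive primes $p < q$ with $q \ge 31$ satisfies $5q < 6p$. (One may use "$<$" rather than "$\le$" since $5q = 6p$ is impossible for primes $p,q$: it would force $p = 5$, $q = 6$.)

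Next I would dispose of all large primes using the Chebyshev function $\vartheta(x) = \sum_{p \le x} \log p$ together with explicit bounds of the form $|\vartheta(x) - x| < \varepsilon_0\, x/\log x$ valid for all $x \ge x_0$ with a small explicit constant $\varepsilon_0$ and explicit $x_0$ (for instance the Rosser–Schoenfeld inequalities, or Nagura's own estimates). For real $x \ge x_0$, using $\log(6x/5) > \log x$, one then estimates
\[
\vartheta\!\left(\tfrac{6x}{5}\right) - \vartheta(x) \ >\ \tfrac{6x}{5}\Bigl(1 - \tfrac{\varepsilon_0}{\log x}\Bigr) - x\Bigl(1 + \tfrac{\varepsilon_0}{\log x}\Bigr) \ =\ x\Bigl(\tfrac15 - \tfrac{11\varepsilon_0}{5\log x}\Bigr),
\]
which is positive once $\log x > 11\varepsilon_0$. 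Hence, for every real $x \ge x_0$, the interval $(x, 6x/5]$ contains a prime. Applying this with $x = p$ for consecutive primes $p < q$ with $p \ge x_0$ forces $q \le 6p/5$, i.e. $5q \le 6p$, and the divisibility remark above upgrades this to $5q < 6p$. With standard explicit constants one may take $x_0$ to be a few thousand.

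It then remains to verify $5 p_{k+1} < 6 p_k$ directly for every consecutive pair of primes with $31 \le p_{k+1} < x_0$, which is a finite check; the extremal instance is $p_k = 31,\ p_{k+1} = 37$, where $5\cdot 37 = 185 < 186 = 6\cdot 31$. I expect the main difficulty to be quantitative bookkeeping rather than anything conceptual: one must pin down explicit Chebyshev bounds and a threshold $x_0$ small enough that the finite verification stays manageable, while keeping in mind that the ratio $6/5$ is essentially best possible near the small primes — the pairs $(23,29)$ and $(31,37)$ show that neither the constant $6/5$ nor the hypothesis $n > 25$ can be relaxed. Since this statement is quoted from \cite{primeint}, within the present paper it suffices to cite it; the sketch above records how it is established.
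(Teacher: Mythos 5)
The paper offers no proof of this statement at all --- it is imported verbatim from the cited reference (Nagura's theorem), so a citation is all that is required, as you note at the end. Your sketch is nonetheless a correct outline of the standard argument: the reduction to the claim that consecutive primes $p<q$ with $q\ge 31$ satisfy $5q<6p$ is airtight in both directions (and your observation that $5q=6p$ is impossible correctly upgrades $\le$ to $<$), the Chebyshev computation $\vartheta(6x/5)-\vartheta(x) > x\bigl(\tfrac15 - \tfrac{11\varepsilon_0}{5\log x}\bigr)$ is arithmetically right, and the remaining work is indeed only the choice of explicit $\varepsilon_0, x_0$ plus a finite verification, with $(31,37)$ as the tight case. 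Nothing further is needed here.
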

\begin{lemma}\label{3notf}
If $p_m\geq 29$, then $p_m$ is not $3$-mighty. 
\end{lemma}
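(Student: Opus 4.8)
The plan is to use the characterization from Lemma \ref{rephrased}: to show $p_m = Q \geq 29$ is not $3$-mighty, it suffices to verify $S_{1,m}(3) - S_{1,m}(3)^2/2 > \frac{1}{Q^3+1}$. Since $S_{1,m}(3)$ is small, a cleaner sufficient condition is $\frac{1}{2}S_{1,m}(3) > \frac{1}{Q^3+1}$, i.e. $S_{1,m}(3) > \frac{2}{Q^3+1}$, which I will aim to establish. I would estimate $S_{1,m}(3) = \sum_{t = m+1}^\infty p_t^{-3}$ from below using Theorem \ref{primeint}: since for $n > 25$ there is a prime in $(n, 6n/5)$, starting from $Q \geq 29$ there is a prime $q_1 \in (Q, 6Q/5)$, then a prime $q_2 \in (q_1, 6q_1/5) \subseteq (Q, (6/5)^2 Q)$, and so on. This gives at least one prime in each interval $(Q(6/5)^{k-1}, Q(6/5)^k]$ for $k \geq 1$, hence
$$S_{1,m}(3) \geq \sum_{k=1}^\infty \frac{1}{(Q(6/5)^k)^3} = \frac{1}{Q^3}\sum_{k=1}^\infty (5/6)^{3k} = \frac{1}{Q^3}\cdot\frac{(5/6)^3}{1-(5/6)^3} = \frac{1}{Q^3}\cdot\frac{125}{91}.$$

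It then remains to check that $\frac{125}{91}\cdot\frac{1}{Q^3} > \frac{2}{Q^3+1}$, equivalently $\frac{125}{91}(Q^3+1) > 2Q^3$, i.e. $Q^3 < \frac{125}{182-125} = \frac{125}{57}$ — wait, that inequality fails for large $Q$, so the crude bound $\frac12 S_{1,m}(3) > \frac{1}{Q^3+1}$ with only the geometric tail is not quite enough; I would instead keep the full count more carefully, noting that in intervals where $(6/5)^k$ spans a factor of $2$ or more one gets several primes, or simply use that $\frac{125}{91} > 1$ together with a sharper accounting. Concretely, a factor-$6/5$ step is wasteful: between $Q$ and $2Q$ there are roughly $\log 2 / \log(6/5) \approx 3.8$ guaranteed primes, so one really gets $S_{1,m}(3) \geq \frac{1}{Q^3}\cdot\frac{1}{1-(5/6)^3}\big(1 + O((5/6)^3)\big)$ with a constant comfortably exceeding $2$; alternatively, since $Q \geq 29$ the denominator $Q^3+1$ versus $Q^3$ costs essentially nothing, and I can afford to use the two-term bound $S_{1,m} - S_{1,m}^2/2$ directly, where $S_{1,m}^2/2$ is negligible (of order $Q^{-6}$). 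The main obstacle is thus purely arithmetic bookkeeping: squeezing the constant in the lower bound for $S_{1,m}(3)$ above $2$ by counting guaranteed primes per dyadic block rather than per $6/5$-block, and then confirming the resulting clean inequality for all $Q \geq 29$ (the worst case), which reduces to a finite check plus monotonicity. I expect that packing the Ramanujan–Nagura-type prime intervals tightly enough to beat the constant $2$ — rather than the naive geometric sum — is the one place requiring genuine care.
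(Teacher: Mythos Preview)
Your geometric lower bound via Theorem \ref{primeint} and the appeal to Lemma \ref{rephrased} match the paper exactly, and you correctly compute $S_{1,m}(3) \geq \dfrac{125}{91\,Q^3}$. Where you go astray is in the bookkeeping after that: the crude replacement $S_{1,m} - S_{1,m}^2/2 \geq \tfrac12 S_{1,m}$ is far too wasteful when $S_{1,m}$ is tiny (of order $Q^{-3}$), and chasing a constant above $2$ by tighter prime-packing is unnecessary. The ``alternatively'' branch you mention---using $S_{1,m} - S_{1,m}^2/2$ directly---is precisely what the paper does, and it closes immediately.

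Concretely: since $S_{1,m}(3) < \zeta(3)-1 < 1$ and $x - x^2/2$ is increasing on $(0,1)$, the lower bound $S_{1,m}(3) \geq 1.35/Q^3$ yields
\[
S_{1,m}(3) - \tfrac12 S_{1,m}(3)^2 \;\geq\; \frac{1.35}{Q^3} - \frac{(1.35)^2}{2Q^6} \;\geq\; \frac{1.35}{Q^3} - \frac{(1.35)^2}{2\cdot 29^3\,Q^3} \;>\; \frac{1.3}{Q^3} \;>\; \frac{1}{Q^3+1}.
\]
The point is that the quadratic correction is $O(Q^{-6})$, so the relevant threshold for the leading constant is $1$, not $2$; your $125/91 \approx 1.37$ already clears it with room to spare. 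No further prime-counting refinement is needed.
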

\begin{proof}
Recall from Lemma \ref{rephrased} that it is sufficient to show that for $p_m \geq 29$, 
$$\dfrac{1}{p_m^3 + 1} < S_{1, m}(r) - S_{1, m}(r)^2/2,$$
where $S_{1, m}(r) = \dfrac{1}{p_{m+1}^3} + \dfrac{1}{p_{m+2}^3} + \cdots.$
Due to Theorem \ref{primeint}, 
$$S_{1, m}(r)= \dfrac{1}{p_{m+1}^3} + \dfrac{1}{p_{m+2}^3} + \cdots \geq \dfrac{1}{p_m^3(6/5)^3} + \dfrac{1}{p_{m}^3(6/5)^6} + \cdots = \dfrac{1}{p_m^3} \dfrac{(5/6)^3}{1 - (5/6)^3} > \dfrac{1.35}{p_m^3}.$$
Because $S_{1, m}(r) < \zeta(3) - 1 < 1$ and $x- x^2/2$ grows on $(0, 1)$, we can deduce 
$$S_{1, m}(r) - S_{1, m}(r)^2/2 \geq  \dfrac{1.35}{p_m^3} -  \dfrac{(1.35)^2}{2 p_m^6} \geq   \dfrac{1.35}{p_m^3} -  \dfrac{(1.35)^2}{2 (29)^3 p_m^3} >  \dfrac{1.3}{p_m^3} >\dfrac{1}{p_m^3+1} .\qedhere$$
\end{proof}

Lemma \ref{3notf} allows us to compute $r$-mighty primes for  $r < 3$ by computer checking all the primes up to $29$. This computer check shows that 
\begin{align}\label{orderofrs}
r_3 < r_2 <r_5 <r_7< r_{p} \text{ for any other }p > 7,
\end{align}
and for $r = 3$ the mighty primes are $2, 3, 5$.
We now apply Sanna's algorithm, which we mentioned earlier, to prove two Lemmas which imply Theorem \ref{s4main}.
\begin{lemma}
Suppose for some $r$, $L_r = 2$. Then $\sigbar$ consists of at most three connected components. 
\end{lemma}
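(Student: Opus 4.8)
The plan is to run two steps of Sanna's backwards induction algorithm (Theorems \ref{bananafish}, \ref{inductstep}, \ref{structurethm}) starting from $\overline{\sigma_{-r}(\N_2)}$, track every endpoint explicitly, and then count overlaps among a short explicit list of intervals. Since $L_r=2$, the prime $3=p_2$ is $r$-mighty while $5=p_3$ (hence $7,11,\dots$, by (\ref{orderofrs})) is not; in particular the computer check behind (\ref{orderofrs}) forces $r\le r_5<5/2$. Put $b_0:=\prod_{t\ge 3}\frac{1}{1-p_t^{-r}}$ and $\beta:=\frac{b_0}{1-3^{-r}}$. Then mightiness of $3$ gives $b_0<1+3^{-r}$, non-mightiness of $5$ gives $b_0\ge\frac{1+5^{-r}}{1-5^{-r}}$, and $b_0\ge\frac{1}{1-5^{-r}}>1+5^{-r}$ unconditionally.

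By Theorem \ref{bananafish}, $\overline{\sigma_{-r}(\N_2)}=[1,b_0]$. Applying Theorem \ref{structurethm} with $p=p_2=3$, I claim the threshold equals $1$: from $\sigma_{-r}(3)/\sigma_{-r}(1)=1+3^{-r}>b_0$ we get it is $\ge1$, and from $\sigma_{-r}(3^2)/\sigma_{-r}(3)=1+\frac{3^{-2r}}{1+3^{-r}}<1+5^{-r}<b_0$ (the first inequality being equivalent to $5^r<9^r+3^r$) we get it is $\le1$. Hence $\overline{\sigma_{-r}(\N_1)}=[1,b_0]\cup[1+3^{-r},\beta]$, a disjoint union of two closed intervals separated by the gap $(b_0,1+3^{-r})$. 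Now apply Theorem \ref{structurethm} once more, with $p=p_1=2$, to each of $I_1:=[1,b_0]$ and $I_2:=[1+3^{-r},\beta]$. For $I_1$ the threshold is $1$: $1+2^{-r}>1+3^{-r}>b_0$ makes it $\ge1$, while $1+\frac{2^{-2r}}{1+2^{-r}}\le\frac{1+5^{-r}}{1-5^{-r}}\le b_0$ makes it $\le1$, the left inequality reducing to $5^r\le 2\cdot 4^r+2^{r+1}+1$, which holds since $r<3$. For $I_2$, whose endpoint ratio $\frac{b_0}{1-9^{-r}}$ exceeds that of $I_1$, the threshold is at most that of $I_1$, hence $\le1$, and it is $\ge1$ because $\frac{b_0}{1-9^{-r}}<\frac{1}{1-3^{-r}}<1+2^{-r}$ for $r>1$. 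Using $b_0\,\sigma_{-r}(2^\infty)=\zeta(r)(1-3^{-r})$ and $\beta\,\sigma_{-r}(2^\infty)=\zeta(r)$, Theorem \ref{inductstep} then yields
\[
\overline{\sigma_{-r}(\N)}=[1,b_0]\ \cup\ [1+2^{-r},\,\zeta(r)(1-3^{-r})]\ \cup\ [1+3^{-r},\beta]\ \cup\ [\sigma_{-r}(6),\zeta(r)].
\]

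It remains to count the connected components of this union. Ordering left endpoints $1<1+3^{-r}<1+2^{-r}<\sigma_{-r}(6)$: the interval $[1,b_0]$ is disjoint from the others since $b_0<1+3^{-r}$; the intervals $[1+3^{-r},\beta]$ and $[1+2^{-r},\zeta(r)(1-3^{-r})]$ overlap precisely when $1+2^{-r}\le\beta=\zeta(r)(1-2^{-r})$, i.e.\ precisely when $2$ is \emph{not} $r$-mighty; and $[1+2^{-r},\zeta(r)(1-3^{-r})]$ and $[\sigma_{-r}(6),\zeta(r)]$ always overlap, because $\sigma_{-r}(6)\le\zeta(r)(1-3^{-r})$ amounts to $b_0\ge(1-4^{-r})(1+3^{-r})$, which follows from $b_0\ge\frac{1+5^{-r}}{1-5^{-r}}$ together with the elementary inequality $\frac{1+5^{-r}}{1-5^{-r}}\ge(1-4^{-r})(1+3^{-r})$ valid for $r<5/2$. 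Consequently $\overline{\sigma_{-r}(\N)}$ has exactly $3$ components (with gaps $(b_0,1+3^{-r})$ and $(\beta,1+2^{-r})$) when $2$ is $r$-mighty and exactly $2$ components otherwise; in either case $C_r\le3$. The main obstacle is the bookkeeping of the second induction step together with this last overlap: the inequality $b_0\ge(1-4^{-r})(1+3^{-r})$ genuinely fails when $r$ is close to $3$, so it is essential here that $L_r=2$ pins $r$ below $5/2$ via the computer check behind (\ref{orderofrs}).
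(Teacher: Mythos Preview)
Your argument is correct and follows the same two-step application of Sanna's algorithm as the paper. The paper, like you, finds threshold $t=1$ at both stages and arrives at the same four closed intervals, then verifies the single overlap $\sigma_{-r}(6)\le \zeta(r)(1-3^{-r})$ and concludes ``at most three''.

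The difference is in how the inequalities are certified. The paper pins down $1.8<r<2.3$ numerically and checks each threshold and the final overlap by a Mathematica computation involving $\zeta(r)$. You instead extract what is actually needed from $L_r=2$: mightiness of $3$ gives $b_0<1+3^{-r}$, non-mightiness of $5$ gives $b_0\ge(1+5^{-r})/(1-5^{-r})$, and these two facts drive all the threshold checks via explicit algebraic inequalities in $2^{-r},3^{-r},5^{-r}$. In particular, your reduction of the overlap $\sigma_{-r}(6)\le\zeta(r)(1-3^{-r})$ to $b_0\ge(1-4^{-r})(1+3^{-r})$, and then to the elementary comparison $(1+5^{-r})/(1-5^{-r})\ge(1-4^{-r})(1+3^{-r})$ on $r<5/2$, eliminates the appeal to $\zeta$ entirely; this is a genuine simplification, though that last inequality still needs a short numerical check (it fails near $r\approx 2.7$, so your remark that the bound $r<5/2$ is essential is well taken). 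Your extra observation that the middle two intervals merge exactly when $2$ is not $r$-mighty is correct and sharper than what the lemma requires.
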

\begin{proof}
We apply Sanna's algorithm.
\begin{enumerate}
\item By Theorem \ref{bananafish}, $ \overline{\sigma_{-r}(\N_{2})} = \left[1,  \prod\limits_{j = 3}^\infty \dfrac{1}{1 - p_t^{-r}}\right].$
\item Now we find the smallest $t \in \Z_{\geq 0}$ such that 
\begin{align}\label{fits}
\sigma_{-r}(3^{t+1})/\sigma_{-r}(3^{t}) < \prod\limits_{j = 3}^\infty \dfrac{1}{1 - p_t^{-r}}.
\end{align} Since $3$ is $r$-mighty, by definition $t = 0$ doesn't satisfy (\ref{fits}). Because $L_r = 2$, we know from \ref{orderofrs} that $1.8 <r_3 \leq r < r_5<2.3$. Using that, an easy computer calculation shows $t = 1$ satisfies (\ref{fits}).
\item Hence
$$\overline{\sigma_{-r}(\N_{1})} = \left[1,  \prod\limits_{j = 3}^\infty \dfrac{1}{1 - p_t^{-r}}\right]\cup \left[1 + \dfrac{1}{3^r},  \prod\limits_{j = 2}^\infty \dfrac{1}{1 - p_t^{-r}}\right].$$ 
\item  Now we find the smallest $t \in \Z_{\geq 0}$ such that $\sigma_{-r}(2^{t+1})/\sigma_{-r}(2^{t}) < \prod\limits_{j = 3}^\infty \dfrac{1}{1 - p_t^{-r}}.$ We can again do it with a simple computation. Using that $1.8 < r < 2.3$, we find that $t = 1$.

\item Now we find the smallest $t$ such that $$\frac{\sigma_{-r}(2^{t+1})}{\sigma_{-r}(2^{t})} <  \frac{\prod\limits_{j = 2}^\infty \dfrac{1}{1 - p_t^{-r}}}{ \left(1 + \dfrac{1}{3^r}\right)} .$$ We compute that $t=1$.\\
\item Hence $\overline{\sigma_{-r}(\N_{0})} $ is equal to
\begin{align*}
 \left[1,  \prod\limits_{j = 3}^\infty \dfrac{1}{1 - p_t^{-r}}\right]\cup \left[1 + \dfrac{1}{3^r},  \prod\limits_{j = 2}^\infty \dfrac{1}{1 - p_t^{-r}}\right] &\cup \left[1 + \frac{1}{2^r},  \frac{1}{1 - 2^{-r}}\prod\limits_{j= 3}^\infty \dfrac{1}{1 - p_t^{-r}}\right]\cup \\
&\cup \left[\left(1 + \dfrac{1}{2^r}\right)\left(1 + \dfrac{1}{3^r}\right), \zeta(r) \right].
\end{align*}
\item We find using Mathematica that for $r < 2.3$, 
$$\left(1 + \dfrac{1}{2^r}\right)\left(1 + \dfrac{1}{3^r}\right) < \zeta(r) (1 - 3^{-r}) =  \frac{1}{1 - 2^{-r}}\prod\limits_{j = 3}^\infty \dfrac{1}{1 - p_t^{-r}} ,$$ and hence 
$$\overline{\sigma_{-r}(\N_{0})}  = \left[1,  \prod\limits_{j = 3}^\infty \dfrac{1}{1 - p_t^{-r}}\right]\cup \left[1 + \dfrac{1}{3^r},  \prod\limits_{j = 2}^\infty \dfrac{1}{1 - p_t^{-r}}\right] \cup \left[1 + \frac{1}{2^r}, \zeta(r) \right],$$
which is at most three disjoint intervals.  \qedhere
\end{enumerate}
\end{proof}

\begin{lemma}
Suppose for some $r$ there are $3$ or more $r$-mighty primes. Then $\sigbar$ has at least $5$ connected components. 
\end{lemma}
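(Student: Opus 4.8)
The plan is to run Sanna's backwards-induction algorithm, just as in the previous lemma, but starting one step earlier since we now have $L_r \geq 3$. The key structural point is that gaps, once created, are never destroyed by the algorithm: by Theorem~\ref{inductstep} and Theorem~\ref{structurethm}, at each stage we replace $\overline{\sigma_{-r}(\N_K)}$ by $\bigcup_j \bigcup_i \sigma_{-r}(p_{K}^i)\cdot I_j$, and dilating an interval $I_j$ by the various factors $\sigma_{-r}(p_K^i)$ produces a chain of intervals whose leftmost point is $a_j$ (the original left endpoint, from $i=0$) and whose structure to the right is governed by Theorem~\ref{structurethm}; in particular the interval $I_j$ itself survives as the $i=0$ term, and any gap strictly to the left of all of $\sigma_{-r}(\N_{K+1})$'s intervals is preserved. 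So it suffices to exhibit, after processing the three smallest primes $2,3,5$ (which by~(\ref{orderofrs}) are exactly the relevant mighty primes once $L_r\geq 3$, since $r_5$ is the third-smallest of the $r_p$ and $L_r\geq 3$ forces $r\geq r_5$), at least four disjoint gaps in $\overline{\sigma_{-r}(\N_0)}$ — equivalently at least five connected components.

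First I would pin down the range of $r$: $L_r\geq 3$ together with~(\ref{orderofrs}) gives $r_5\le r$, and one should also record an upper bound — either $r<r_7$ (if $L_r=3$) or note that for the lemma we only need $r\ge r_5\approx$ (a specific constant just above $2$), and handle $r\ge r_7$, $r \geq r_{11}$, etc. separately if needed, though in fact larger $L_r$ only adds more gaps so it is enough to treat the generic bound $r\ge r_5$ and push through. Then I would carry out the algorithm: start from $\overline{\sigma_{-r}(\N_3)}=[1,\prod_{t\ge 4}(1-p_t^{-r})^{-1}]$, then successively fold in $p_3=5$, $p_2=3$, $p_1=2$, at each step using Theorem~\ref{structurethm} to compute the relevant threshold exponents $t_j$ via the explicit inequality $\sigma_{-r}(p^{t+1})/\sigma_{-r}(p^t)\le b_j/a_j$. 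The concrete gaps I expect to survive to the end are the three "mighty-prime" gaps guaranteed by Theorem~\ref{mingaps} — namely just below $1+2^{-r}$, just below $1+3^{-r}$, and just below $1+5^{-r}$ — plus (at least) one more gap produced inside the folding, for instance the gap that appears when dilating by powers of $2$ separates the cluster near $1$ from the cluster near $1+2^{-r}$, analogous to the gap $(\prod_{t\ge3}(1-p_t^{-r})^{-1},\ 1+3^{-r})$ that already showed up in the $L_r=2$ case and now sits strictly left of everything coming from the $p=2$ and further dilations.

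The main obstacle is verifying that these candidate gaps are genuinely disjoint and nonempty for the \emph{entire} admissible range of $r$, not just a sample value — this is exactly where the previous lemma used "an easy computer calculation" / "Mathematica," and here the interval of $r$ is a half-line $[r_5,\infty)$ rather than a bounded interval, so the finite Mathematica check must be supplemented by an asymptotic argument for large $r$. For large $r$ this is easy: as $r\to\infty$ all of $1+2^{-r},1+3^{-r},1+5^{-r},\zeta(r)$ collapse toward $1$ at different exponential rates, so the ordering $1<\cdots<1+5^{-r}<\cdots<1+3^{-r}<\cdots<1+2^{-r}<\cdots$ is eventually forced and the gaps are manifest; the delicate part is the finite range $r_5\le r\le R_0$ for some explicit $R_0$, which reduces to finitely many inequalities among the quantities $\sigma_{-r}(p^i)$ and tails $\prod_{t\ge k}(1-p_t^{-r})^{-1}$ that can be checked by interval arithmetic. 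I would therefore structure the proof as: (i) set up the algorithm and identify the four gaps symbolically; (ii) prove they are disjoint for $r\ge R_0$ by the collapse-rate argument; (iii) dispatch $r_5\le r\le R_0$ by a finite verification, citing Theorem~\ref{primeint}-style bounds or direct computation to control the infinite tails $\prod_{t\ge k}(1-p_t^{-r})^{-1}$ as in Lemma~\ref{3notf}.
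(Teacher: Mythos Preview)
Your proposal has two real gaps.

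First, you miss the one-line reduction that makes the problem tractable. You write ``larger $L_r$ only adds more gaps,'' which is exactly the statement $C_r\ge N_r+1$ of Theorem~\ref{mingaps}; but then you fail to draw the consequence: if $N_r\ge 4$ the conclusion $C_r\ge 5$ is immediate, so one may assume $N_r=3$, whence the mighty primes are exactly $2,3,5$ and $r_5\le r<r_7$, i.e.\ $r\in[2.2,2.5]$. Your plan to treat the half-line $[r_5,\infty)$ by a separate asymptotic argument plus a finite check is unnecessary; the paper never leaves the bounded window.

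Second, and more seriously, you do not actually name a fourth gap. The candidate you describe --- ``the gap that appears when dilating by powers of $2$ separates the cluster near $1$ from the cluster near $1+2^{-r}$,'' said to be ``analogous to'' $(u_2,1+3^{-r})$ --- is not a new gap: the gap just below $1+2^{-r}$ \emph{is} the mighty-prime gap $(u_1,\sigma_{-r}(2))$, already on your list of three. Your supporting heuristic that ``gaps, once created, are never destroyed by the algorithm'' is also false in general: when passing from $\overline{\sigma_{-r}(\N_{K+1})}$ to $\overline{\sigma_{-r}(\N_K)}$ the dilates $\sigma_{-r}(p_{K+1}^i)\cdot I_{j'}$ coming from a \emph{different} interval $I_{j'}$ can land inside a pre-existing gap of $\overline{\sigma_{-r}(\N_{K+1})}$ and fill it. (The previous lemma already shows intervals merging at the final step.) The mighty-prime gaps do survive, but that is the content of Theorem~\ref{mingaps}, not a general feature of the folding.

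The paper's argument avoids the algorithm entirely. After reducing to $r\in[2.2,2.5]$ it simply exhibits the fourth gap as
\[
\bigl(u_3\,\sigma_{-r}(2),\ \sigma_{-r}(10)\bigr),
\]
the dilate of the $5$-gap $(u_3,\sigma_{-r}(5))$ by the factor $\sigma_{-r}(2)$, which lies \emph{above} $\sigma_{-r}(2)$ and is visibly disjoint from the three mighty-prime gaps. Emptiness of $\sigma_{-r}(\N)\cap(u_3\sigma_{-r}(2),\sigma_{-r}(10))$ is then checked by a short case split on $n$: if $n$ is odd then $\sigma_{-r}(n)\le u_1<u_3\sigma_{-r}(2)$; if $4\mid n$ then $\sigma_{-r}(n)\ge\sigma_{-r}(4)>\sigma_{-r}(10)$ (using $2.2\le r\le 2.5$); if $n=2k$ with $k$ odd and $3\mid k$ or $5\mid k$ then $\sigma_{-r}(n)\ge\sigma_{-r}(10)$; otherwise $k\in\N_3$ so $\sigma_{-r}(k)\le u_3$ and $\sigma_{-r}(n)\le u_3\sigma_{-r}(2)$. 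This is both shorter and more robust than tracking the full algorithm.
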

\begin{proof}
As previously, let $N_r$ and $C_r$ be the number of $r$-mighty primes and the number of connected components of $\overline{\sigma_{-r}(\mathbb N)}$, respectively. If $N_r\geq 4$, then $C_r\geq 5$ by Theorem \ref{mingaps}. Thus, we may assume $N_r = 3$. The $r$-mighty primes must be $2,3,$ and $5$. Moreover, $r_5\leq  r < r_7$. Using Mathematica, we see that this implies that $2.2\leq r\leq 2.5$. 
For simplicity, let $$u_m=\prod_{t=m+1}^\infty\frac{1}{1-p_t^{-r}}$$ for $m \in \N$. Recall from the proof of Lemma \ref{mingaps} that since $2, 3, 5$ are $r$-mighty, $\overline{\sigma_{-r}(\N)}$ is guaranteed to have gaps
\begin{align}\label{Colin's Equation}
(u_3,\sigma_{-r}(5)),(u_2,\sigma_{-r}(3)),\text{ and }(u_1,\sigma_{-r}(2))
\end{align} 
(recall that a gap of $\overline{\sigma_{-r}(\N)}$ is a bounded connected component of $\R\setminus\overline{\sigma_{-r}(\N)}$).  To complete the proof, we will show that $(u_3\sigma_{-r}(2),\sigma_{-r}(10))$ is another gap of $\overline{\sigma_{-r}(\mathbb N)}$. 

First, note that $\sigma_{-r}(2)<u_3\sigma_{-r}(2)<\sigma_{-r}(5)\sigma_r{-r}(2)=\sigma_{-r}(10)$. This implies that $(u_3\sigma_{-r}(2),\sigma_{-r}(10))$ is a nonempty interval that is disjoint from the three gaps listed in \eqref{Colin's Equation}.  We also note that $u_3\sigma_{-r}(2)$ and $\sigma_{-r}(10)$ are elements of $\overline{\sigma_{-r}(\mathbb N)}$. Thus, we are left to show that $\sigma_{-r}(\mathbb N)\cap(u_3\sigma_{-r}(2),\sigma_{-r}(10))=\emptyset$. 

Choose a positive integer $n$. We will show that $\sigma_{-r}(n)\not\in(u_3\sigma_{-r}(2),\sigma_{-r}(10))$. If $n$ is odd, then it follows from the argument used in the proof of Lemma 3.1 that $\sigma_{-r}(n)<u_1<u_3\sigma_{-r}(2)$. Thus, we may assume $n$ is even. Because $2.2\leq r\leq 2.5$, it is easy to check that $\sigma_{-r}(10)<\sigma_{-r}(4)$. If $4\mid n$, then $\sigma_{-r}(n)\geq\sigma_{-r}(4)>\sigma_{-r}(10)$. Therefore, we may assume $n=2k$ for some \emph{odd} positive integer $k$. If $3\mid k$ or $5\mid k$, then $\sigma_{-r}(n)\geq\sigma_{-r}(10)$. Consequently, we may assume $k$ is not divisible by $2,3,$ or $5$. It follows from the proof of Lemma 3.1 that $\sigma_{-r}(k)<u_3$. Thus, $\sigma_{-r}(n)<u_3\sigma_{-r}(2)$ as desired.  
\end{proof}
This concludes the proof of Theorem \ref{s4main}.

\section{Acknowledgments}
 The research was conducted during the Undergraduate Mathematics Research Program at University of Minnesota Duluth, and supported by the grant NSF / DMS-1659047.
I would like to thank Joe Gallian for creating an amazing working environment in the Duluth REU program, for his invaluable support and inexhaustible good humor. I would like to thank Colin Defant, Levent Alpoge and Mitchell Lee for very thorough and helpful comments and suggestions. I would also like to thank Benjamin Gunby for superb advising. 
\bibliography{SigmaDivisor}

\begin{thebibliography}{10}

\bibitem{Eiref}
Milton Abramowitz and Irene~A Stegun.
\newblock {\em Handbook of mathematical functions: with formulas, graphs, and
  mathematical tables}, volume~55.
\newblock Courier Corporation, 1964.

\bibitem{Colinpaper}
Colin Defant.
\newblock {On the density of ranges of generalized divisor functions}.
\newblock {\em Notes on Number Theory and Discrete Mathematics}, 21:87--88,
  2015.

\bibitem{defantComplex}
Colin Defant.
\newblock Complex divisor functions.
\newblock {\em Geometry and Number Theory}, 1:22--48, 2016.

\bibitem{Colinnew}
Colin Defant.
\newblock Connected components of complex divisor functions.
\newblock {\em arXiv preprint arXiv:1711.04244}, 2017.

\bibitem{difference}
David~R Heath-Brown and Henryk Iwaniec.
\newblock On the difference between consecutive primes.
\newblock {\em Inventiones mathematicae}, 55(1):49--69, 1979.

\bibitem{laatsch}
Richard Laatsch.
\newblock Measuring the abundancy of integers.
\newblock {\em Mathematics Magazine}, 59(2):84--92, 1986.

\bibitem{primeinint}
Jitsuro Nagura.
\newblock On the interval containing at least one prime number.
\newblock {\em Proceedings of the Japan Academy}, 28(4):177--181, 1952.

\bibitem{piconstbound}
J~Barkley Rosser, Lowell Schoenfeld, et~al.
\newblock Approximate formulas for some functions of prime numbers.
\newblock {\em Illinois Journal of Mathematics}, 6(1):64--94, 1962.

\bibitem{Sannapaper}
Carlo Sanna.
\newblock {On the closure of the image of the generalized divisor function}.
\newblock {\em Uniform Distribution Theory}, 12(2):77--90, 2017.

\bibitem{weiner}
Paul~A Weiner.
\newblock The abundancy ratio, a measure of perfection.
\newblock {\em Mathematics Magazine}, 73(4):307--310, 2000.

\end{thebibliography}
\bibliographystyle{plain}
\end{document}